\documentclass[12pt]{elsarticle}
\usepackage[colorlinks=false, pdfborder={0 0 0}]{hyperref}
\usepackage{graphicx}
\usepackage{float}
\usepackage{subcaption}
\usepackage{algorithm}
\usepackage{algpseudocode}
\usepackage{doi}

\usepackage{amsmath}
\usepackage{amssymb}
\newtheorem{corollary}{Corollary}
\newtheorem{example}{Example} 
\newtheorem{lemma}{Lemma}
\newtheorem{proposition}{Proposition}
\newtheorem{theorem}{Theorem}
\newenvironment{proof}[1][Proof]  {\begin{trivlist}
\item[\hskip \labelsep {\bfseries #1}]} {\end{trivlist}}

\begin{document}

\begin{frontmatter}

\title{A novel monotonous finite volume element scheme for convection dominant diffusion problem\tnoteref{label1}}

\author{\textbf{Cunyun Nie$^{1,*}$, Xiaoling Chen$^{2,*}$, Zhujun wang$^{1}$, Zhikun Tian$^{1}$, Chengjie Xia$^{1}$}}
\address{$1.$ \textsl{{School of Computational Science and Electronics, Hunan Institute of Engineering, 411104, Hunan, China}}}
\address{$2.$ \textsl{ School of Future Technology, Hunan Institute of Engineering, 411104, Hunan, China}}

 \tnotetext[S]{ The work is supported in part by Hunan Provincial Department of Educational Key Project (No.25A0517), Hunan Provincial Social Science Research Key Project‌(No. 25ZDB031), Hunan Provincial Natural Science Foundation (No.2026JJ70070, No.2026JJ80145).\\
 $*$ Corresponding author: Cunyun Nie, E-mail address: ncy1028@163.com.}


\begin{abstract}
One novel monotonous finite volume element (MFVE) scheme is put forward for convection dominant diffusion problem.  The main contributions of this paper include four aspects. Firstly, one upwind sub-control volume, called upwind volume, is introduced for the discretization of the convection term similar to the upwind element, which leads to the upwind property. Secondly, one first-order linear discrete operator for the convection term in the balance equation is directly obtained by numerical integration in upwind volume, which is different from the classical finite volume methods.Thirdly, one asymptotic expansion of gradient functions in the dual element of each node is derived by one extension interpolation function. Then, one second-order nonlinear discrete operator for the convection term is constructed by the asymptotic expansion and error estimation of its FVE solution, where some perturbed coefficients are skillfully put into the expansion, which serves as a high-order correction. Fourthly, one novel two-order MFVE scheme, also accompanied by another one-order MFVE, is designed for convection-dominated diffusion problems together with one positivity-preserving finite volume element (PFVE) discrete diffusive operator. The L2 norm of the error of the approximate solution is derived. Finally, numerical results validate theoretical conclusions.
\end{abstract}

\begin{graphicalabstract}
\includegraphics[width=.85\textwidth]{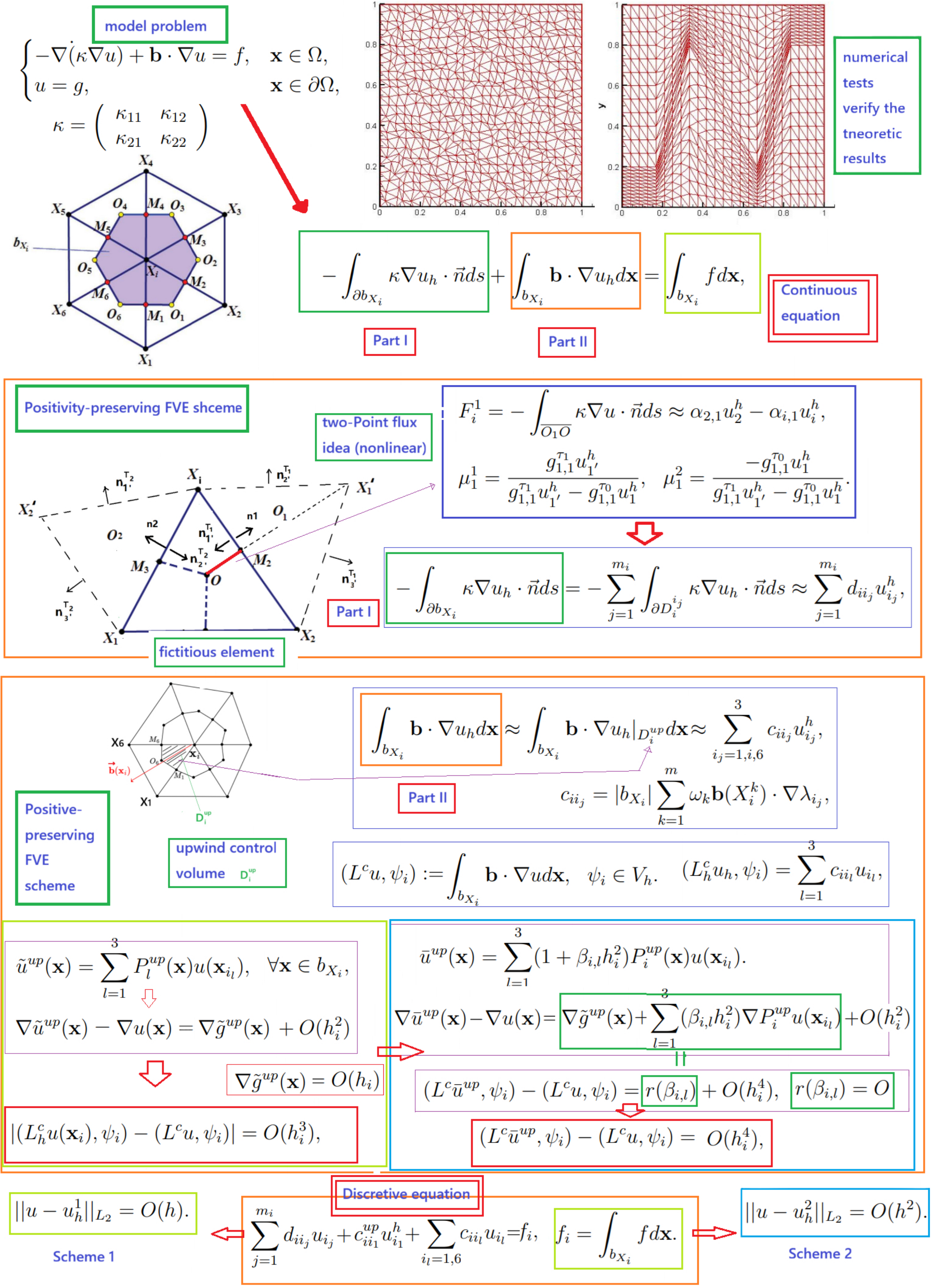}
\end{graphicalabstract}

\begin{highlights}
\item One upwind sub-control volume is initially introduced for the discretization of the convection term.

\item One first-order linear discrete operator for the convection term is directly obtained by numerical integration in upwind volume, which is different from the classical finite volume methods.
    
\item One second-order nonlinear discrete operator for the convection term is constructed by the asymptotic expansion and error estimation of its FVE solution. 
    
\item One novel two-order FVE scheme is designed for convection-dominated diffusion problems, with the L2 norm of the error of the approximate solution  derived.

\end{highlights}


\begin{keyword}
 Convection-dominated diffusion problems; monotonous;  finite volume element scheme; asymptotic expansion
\end{keyword}

\end{frontmatter}

\section{Intoduction}
\label{aas}




Convection diffusion equation describes the diffusion motion of a physical quantity (such as concentration, temperature, etc.) in a fluid under the action of transport, and is widely applied in numerous fields including fluid mechanics, reservoir simulation, weather forecasting, groundwater prediction and environmental protection \cite{2,3}.
The convection term dominates as the Peclet number approaches large, which leads to its numerical difficulty. There have been many numerical methods for this kind of problems over these years: finite element (FE) method \cite{a-3}, finite difference method \cite{a-1}, finite volume (FV) method \cite{a-2}, spectral method \cite{a-4}, and so on.  The most popular FE method often comes across numerical instability with spurious numerical oscillations, which often brings non-physical solutions \cite{PG-1,DG-1}.  
Many stabilization techniques  \cite{z-1, Tabata} are summarized on the discretization of convection term: the streamline upwind Petrov Galerkin, the free-bubble function method, the local projection method, the edge-averaged FEM, the internal penalty function method and the exponential fitting scheme. 
Some extreme-preserving FE schemes  \cite{z-2,PG-2,EAFE-3,AFC-3} are also constructed to eliminate the non-physical solutions. 

The finite volume method is very popular due to its locally 
preserving conservation property \cite{Lipn,Potier,Peng}.
There are many FV researches on the discretization of convection diffusion problem,
also encountering the similar issues, such as spurious numerical oscillations. In early stage, the upwind finite volume method is put forward, and the discretized convective fluxes are divided into two parts: inflow and outflow parts, according to the dot product of flux velocity and outer normal direction along the boundary of control volume \cite{LiR, Lzzarov,Chen}.
The second approximation of convection flux is based on the two-term Taylor expansion of the approximation in the line integral,  such as the upwind second-order convergent with proper slope limiter, the upstream element selected leading to more than nine points in the scheme, and so on \cite{Sheng2, LiAng}.
The third method is based on the modified dual element which is associated with the local Peclet numbers, and the construction of dual element is determined by the convection and diffusion coefficients, also by the mesh size \cite{Gaoyulong}.
Another method is that the general convection dominated diffusion equation is transformed into a conservative form by using exponential transformation, and a stable and high-precision FVE scheme is constructed \cite{ZouX,ZhangX}.
In addition, there appears one hybrid method for it, the diffusion term with the standard FE scheme, and the convection and source terms with the weighted upwind FV scheme, which overcomes numerical oscillation, avoids numerical dispersion, and has high-order accuracy \cite{SunM}. 
Some other techniques are also presented:  correction technique \cite{Yuan}, high-order convergence \cite{Kroner},  two-grid method \cite{HeM}. The above methods are effective, yet the algorithm are always somewhat complex. Inspired by the idea of upwind element, we will attempt to construct upwind volume and adopt one new approach to discretize the convection term in the balance equation. 

In this paper, one novel MFVE scheme is put forward for convection dominant diffusion problem. Firstly, one upwind volume is put forward to discretize the convection term. Then, the convection term in the balance equation is discretized by numerical integration in upwind volume.
One asymptotic expansion of gradient functions over the dual element of each node is derived. Hence, one second-order nonlinear FVE discrete operator for the convection term is constructed, where some perturbed coefficients are skillfully introduced into this asymptotic expansion. Furthermore, the MFVE scheme is designed for convection dominated diffusion problems together with one positivity-preserving FVE discrete diffusive operator. 

The rest of this paper is organized as follows. Section 2 gives the model problem. Section 3 displays the discretization of diffusion term,  Section 4 constructs the discretization of convection term, and obtains the novel MFVE scheme. Numerical experiments are presented in Section 5. Finally, some conclusions are listed.

\section{Model problem and Preliminary knowledge}
\setcounter{equation}{0}

Consider the model problem for unknown $u=u(\mathbf{x})$, $\mathbf{x}=(x,y)$:
\begin{equation}\label{eq-PDE-1}
\begin{cases}
  - \nabla \dot (\kappa \nabla u) + \mathbf{b} \cdot \nabla u = f, &\mathbf{x}\in \Omega, \\
  u = g, &\mathbf{x}\in \partial \Omega,
\end{cases}
\end{equation}
where $\Omega \subset \mathbb{R}^2$, 
$\mathbf{b} \in W^{1,\infty}(\Omega)^d, f\in L^2(\Omega), g \in H^{1/2}(\partial\Omega)$ and $$
\kappa=\left(
         \begin{array}{cc}
           \kappa_{11} & \kappa_{12} \\
           \kappa_{21} & \kappa_{22} \\
         \end{array}
       \right)
$$
is the diffusion tensor which is real-value and symmetric and strictly positive definite. \\

Let $\Omega_h=\{E_k, 1\leq k \leq M  \}$ be the triangular partition of
Region $\Omega$, and $\Omega_h^{*} = \{ b_{X_i} , 1 \leq i \leq N \}$ be the corresponding dual partition, and
$b_{X_i}$ be the dual element (also called the control volume) about Node $X_i$ (Shown as Fig. \ref{kzt}~(a)),
where $M$ and $N$ are the numbers of total elements and partition nodes, respectively, and $h$ denotes the maximum diameter of all elements in the partition $\Omega_h$.
In Fig. \ref{kzt}~(a), Point $M_k$ is the midpoint of $X_iX_k, 1\leq k \leq 6$, and $O_k$ is the barycenter of $\triangle X_iX_kX_{k+1}$, and $X_7=X_1$. For any element $E_k$ (also denoted as $\tau_k$), $D_i, i=1,2,3$ (shown as Fig. \ref{kzt}~(b)) is the sub-control volume about Node $X_i$, respectively.
The dual element $b_{X_i}$ can be seen as the union of several sub-control volumes, i.e.,  $b_{X_i}=\cup_{k=1}^6 D_i^k$, where $D_i^k$ is the quadrilateral region $X_i M_k O_kM_{k+1}$, respectively, and $M_7=M_1$. In addition, $\Omega_i = \bigcup\limits_{l=1}^{6} \tau_{l}$, which is the support of the Lagrangian basis function $\phi_i$.

\begin{figure}[h]
     \centering{
    \includegraphics[scale=0.35]{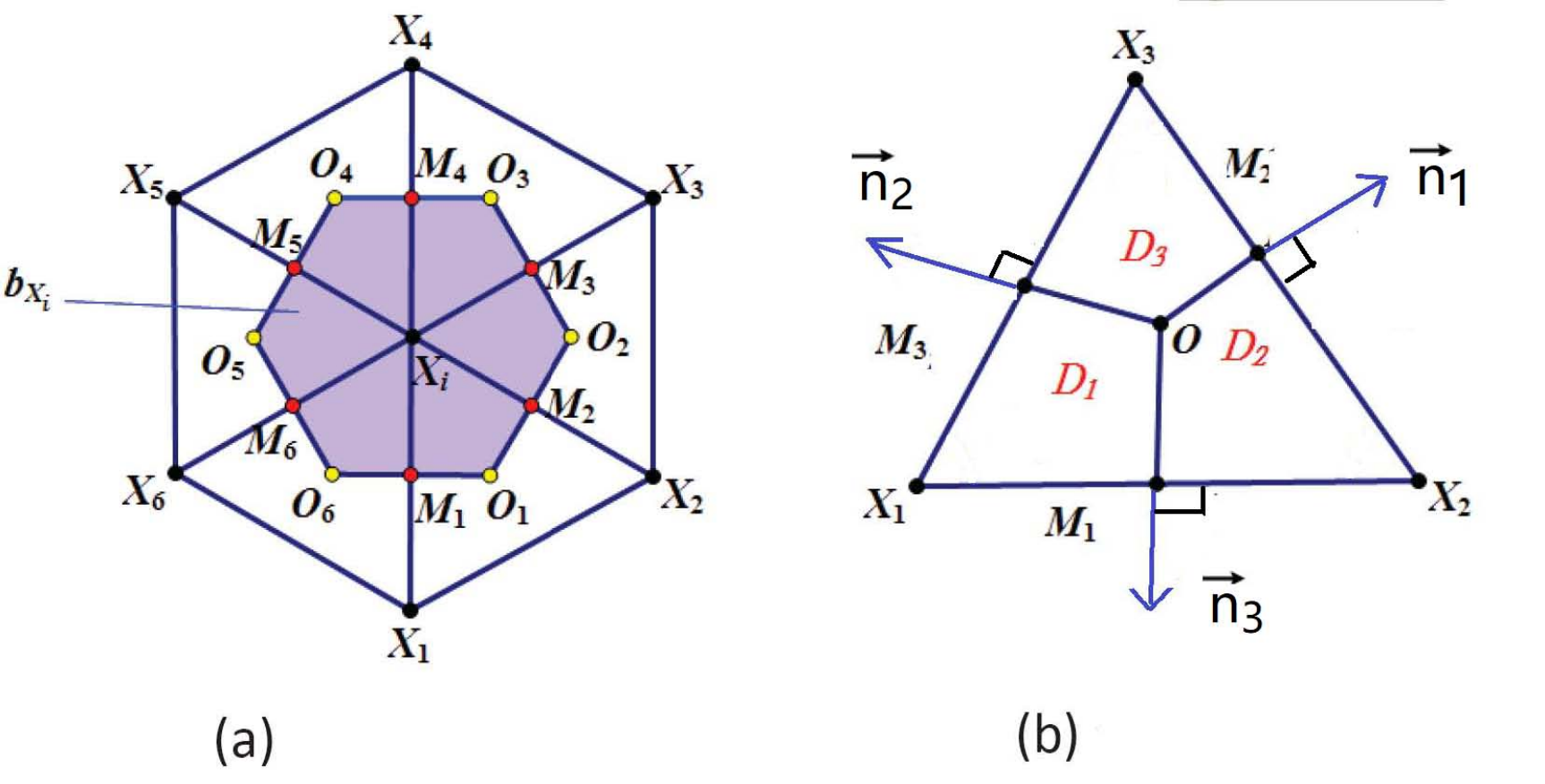}}
    \vskip -0.2cm
    \caption{(a) Dual element $b_{X_i}$. ~(b) Sub-control volume $D_i, i=1,2,3$ and outer normal vector $\vec{n}_i, i=1,2,3$ on three edges.}\label{kzt}
\end{figure}

To discretize the diffusion term, three fictitious elements will be introduced for $\tau_0 = \triangle X_1X_2X_i$ from its three edges. We will take one example for Edge $X_2X_i$ to
illustrate how to obtain its fictitious element. Another vertex $X_{1^{'}}$ of the fictitious $\triangle {X_{1^{'}}X_iX_2}$  (denoted as $\tau_1$) is forced to locate on the extend line of Line $OM_2$ (Shown as Fig. \ref{modp-1-new}). It is obvious that Points $X_1, O, M_2, X_{1^{'}}$ are collinear according to the choice of the barycenter point $O$ for the dual element. Furthermore, Point $O_1$ is chosen as the barycenter of $\triangle{X_{1^{'}}X_iX_2}$. Denote $\alpha_1= \frac{|X_{1^{'}}M_2|}{|X_1M_2|}$. As $\alpha_1$ is determined, Points $X_{1^{'}}, O_1$ are uniquely determined. Similar notations $X_{j^{'}}, \alpha_j, j=2$ are for another fictitious element $\tau_2=\triangle {X_{2^{'}}X_1X_i}$ (Seen in Fig. \ref{modp-1-new} (a)). 

\begin{figure}[h]
     \centering{
    \includegraphics[scale=0.25]{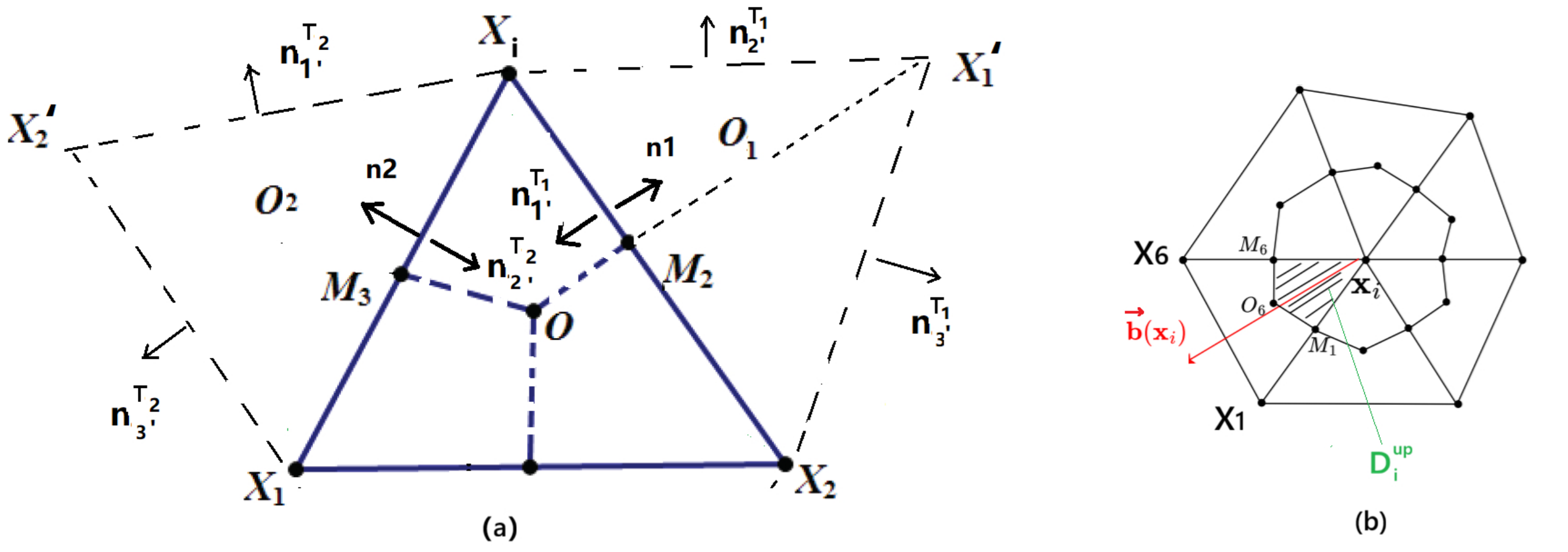}}
    \vskip -0.2cm
    \caption{(a) Element $E_k=\tau_0$ and its two fictitious elements $\tau_i,~i=1,2$ near to Node $X_i$, together with corresponding notations. (b) Upwind volume $D_i^{up}$.
     }\label{modp-1-new}
\end{figure}

To discretize the convection term, we introduce the definition of upwind volume.
For Node $\mathbf{x}_i$, there always exists one sub-control volume $D_i^{up} \in b_{X_i}=\cup_{k=1}^6 D_i^k$,  which is intersected by a ray passing through Node
$\mathbf{x}_i$ in the direction $-\mathbf{b}(\mathbf{x}_i)$. Such sub-region is called upwind volume denoted as $D_i^{up}$ (Shown as Fig. \ref{modp-1-new} (b)), which lies in one element denoted as $\tau_i^{up}$.


Let $U_h$ and $V_h$, respectively, be the trial and test function spaces
\begin{eqnarray}\label{U_h}
 U_h = \{ ~u_h \in C(\bar{\Omega}): ~ u_h|_{E_k} \in {P}_1(E_k), ~ u_h \big|_{\partial\Omega} = g, ~ \cup_{k=1}^M  E_k= \Omega_h \} \subset H^1(\Omega),
\end{eqnarray}
and
\begin{eqnarray*}
 V_h = \{ ~v_h \in L^2(\Omega): ~ v_h \in b_{X_i} = constant, b_{X_i} \in \Omega_h^*\},
\end{eqnarray*}
where
\begin{eqnarray}\label{psi-st}
  v_h(x) = \left\{
                    \begin{array}{ll}
                      1, & \hbox{ ~$\forall x \in b_{X_i} $, } \\
                      0, & \hbox{ ~$\forall x \notin b_{X_i} $.}
                    \end{array}
                  \right.
\end{eqnarray}

Integrating the equation (\ref{eq-PDE-1}) on Control volume $b_{X_i}\in \Omega_h^*$, applying the divergence theorem to the diffusion term, and approximating $u$ by $u_h \in U_h$,
we can obtain the approximate balance equation
 \begin{eqnarray}\label{app-blance}
 - \int_{\partial b_{X_i}}  \kappa  \nabla u_h \cdot  \vec{n} ds +   \int_{ b_{X_i}}  {\bf{b}} \cdot  \nabla u_h  d\mathbf{x} =  \int_{b_{X_i}} f d\mathbf{x},
\end{eqnarray}
where ~$\vec{n}$ is the unit outer normal vector on ~$\partial b_{X_i}$. 

\section{Discretization of the diffusion term}

In this section, we consider the first term $ - \int_{\partial b_{X_i}}  \kappa  \nabla u_h \cdot  \vec{n} ds$  in (\ref{app-blance}) corresponding to
the diffusion term. We firstly present the following result (\cite{Nie2}).

\begin{lemma}\label{grad-lamda}
For any element $\tau_0 = \triangle X_1X_2X_i$ (Shown as Fig. \ref{modp-1-new} (a)), it holds that
\begin{eqnarray*}
    \nabla \lambda_k^{\tau_0}=\frac{L_k}{2S_0}(-\vec{n}_k),~~k=1,2,i,
\end{eqnarray*}
where $ \lambda_k^{\tau_0}$ is the area coordinate corresponding to Node $X_k$, and
$\vec{n}_k$ is outer normal unit vector on the edge opposite to Node $X_k$, and $L_k$ is the length of the edge opposite to Node $X_k$, and $S_{0}$ is the area of Element $\tau_0$.
\end{lemma}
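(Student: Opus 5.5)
Since $\lambda_k^{\tau_0}$ is affine on $\tau_0$, its gradient is a constant vector, so it suffices to find its direction and its magnitude. I will treat $k=1$; the cases $k=2,i$ are identical after relabeling the vertices. Let $e_1=X_2X_i$ be the edge opposite $X_1$, of length $L_1$, with unit outer normal $\vec{n}_1$ (pointing away from $X_1$).

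\emph{Direction.} The area coordinate $\lambda_1^{\tau_0}$ vanishes at $X_2$ and at $X_i$, so it vanishes on the whole edge $e_1$. Hence $e_1$ lies in a level set of $\lambda_1^{\tau_0}$. The gradient of an affine function is orthogonal to its level lines, so $\nabla\lambda_1^{\tau_0}=c\,\vec{n}_1$ for some scalar $c$. Moreover, $\lambda_1^{\tau_0}$ increases from $0$ on $e_1$ to $1$ at $X_1$, which lies on the side opposite to $\vec{n}_1$. Therefore $c<0$, i.e.\ the gradient points along $-\vec{n}_1$.

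\emph{Magnitude.} Let $d_1$ be the distance from $X_1$ to the line containing $e_1$. Moving from the foot of the perpendicular to $X_1$ along $-\vec{n}_1$, the function rises by $1$ over length $d_1$, so $|c|=1/d_1$. The area formula $S_0=\frac12 L_1 d_1$ gives $d_1=2S_0/L_1$, and hence
\begin{eqnarray*}
\nabla\lambda_1^{\tau_0}=\frac{L_1}{2S_0}(-\vec{n}_1).
\end{eqnarray*}

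As an independent check, one could use the explicit formula $\lambda_1^{\tau_0}(\mathbf{x})=\frac{\det[\mathbf{x}-X_2,\;X_i-X_2]}{\det[X_1-X_2,\;X_i-X_2]}$. Its gradient is the vector $X_i-X_2$ rotated by $90^\circ$, divided by $\pm 2S_0$, and the rotated edge vector has length $L_1$ and normal direction. I do not expect a real obstacle. The only point needing care is the sign, namely confirming that the outer normal points away from $X_1$. This follows because $X_1$ lies in the interior half-plane of $e_1$.
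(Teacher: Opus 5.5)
Your argument is correct and complete: vanishing on the opposite edge gives $\nabla\lambda_k^{\tau_0}\parallel\vec{n}_k$, the increase toward $X_k$ fixes the direction as $-\vec{n}_k$, the height $d_k=2S_0/L_k$ gives the magnitude $L_k/(2S_0)$, and your determinant formula is a valid cross-check. The paper gives no proof of its own and simply quotes the result from \cite{Nie2}, so your derivation (the standard elementary argument) supplies what the paper omits and nothing is missing.
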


From Lemma \ref{grad-lamda}, 
\begin{eqnarray}\label{gradu0}
\nabla u_h|_{\tau_{0}}= \sum\limits_{k=1,2,i}  \nabla \lambda_k^{\tau_{0}} u_k^h =\sum\limits_{k=1,2,i}  \frac{L_k}{2S_0}(-\vec{n}_k) u_k^h.
\end{eqnarray}

Similarly, for Element $\tau_l$, 
\begin{eqnarray}\label{gradlambda}
    \nabla \lambda_{k^{'}}^{\tau_l}=\frac{L_{k^{'}}^{\tau_l}}{2S_1}(-\vec{n}_{k^{'}}^{\tau_l}),~~k=1,2,i,
\end{eqnarray}
where $\lambda_{k^{'}}^{\tau_l}$ is the area coordinate corresponding to Node $X_k^{'}$, and
$\vec{n}_{k^{'}}^{\tau_l}$ and $L_{k^{'}}^{\tau_l}$ are respectively, the outer normal unit vector and the length for the edge opposite to Node $X_{k^{'}}$ in $\tau_{l}$, and $S_l, l=1,2,3$ is area of $\tau_l$, respectively.

The first term in (\ref{app-blance}) is restricted to Element $E_k$, 
$$
\tilde{F}_i = -\int_{\overline{M_2O}}  \kappa \nabla  u  \cdot \vec{n} ds-\int_{\overline{OM}_3}  \kappa \nabla  u  \cdot \vec{n} ds.
$$

To obtain $\tilde{F}_i$, we will focus on the flux 
\begin{eqnarray}\label{flux}
F_i^1 = -\int_{\overline{O_1O}}  \kappa \nabla  u  \cdot \vec{n} ds.
\end{eqnarray}
From (\ref{gradu0}),
\begin{eqnarray}  \label{flux1}
F_{i1}^1 \approx -\int_{\overline{O_1O}} (\nabla u^h)|_{\tau_0} \cdot (\kappa^T \vec{n}_1) ds  = a_{1,1}u_1^h+a_{1,2}u_2^h+a_{1,i}u_i^h,
\end{eqnarray}
where
 $ a_{1,j}=\frac{L_j}{2S_0}\vec{n}_j \cdot (\kappa_0^T \vec{n}_{O_1O})|O_1O|,~~j=1,2,i$,  
and $\kappa_0^T=\kappa(O)^T$, and $|O_1O|$ and $\vec{n}_{O_1O}$ are the length and the unit outer normal vector, respectively. \\
From (\ref{gradlambda}),
\begin{eqnarray} \label{flux2}
F_{i2}^1\approx -\int_{\overline{O_1O}} (\nabla u^h)|_{\tau_1} \cdot (\kappa^T \vec{n}_1) dS  = b_{1,1}u_2^h+b_{1,i}u_i^h+b_{1,3}u_{1^{'}}^h
\end{eqnarray}
where
$b_{1,1}=\frac{L_{2^{'}}^{\tau_1}}{2S_1}\vec{n}_2^{'} \cdot (\kappa_1^T \vec{n}_{O_1O})|O_1O|, 
 b_{1,i}=\frac{L_{i^{'}}^{\tau_1}}{2S_1}\vec{n}_i^{'} \cdot (\kappa_1^T \vec{n}_{O_1O})|O_1O|,$ \\
$ b_{1,3}=\frac{L_{1^{'}}^{\tau_1}}{2S_1}\vec{n}_1^{'} \cdot (\kappa_1^T \vec{n}_{O_1O})|O_1O|$, ~~$\kappa_1^T=\kappa(O_1)^T$.

Taking some linear combination (\ref{flux1}) with (\ref{flux2}), we have
\begin{eqnarray}  \label{F1}
F_i^1 \approx \mu_1^1F_{i1}^1+\mu_1^2 F_{i2}^1= \alpha_{2,1} u_2^h - \alpha_{i,1} u_i^h  + \beta_1 u_1^h + \beta_{1^{'}} u_{1^{'}}^h,
\end{eqnarray}
where
$\alpha_{2,1}= \mu_1^1 g_{2,1}^{\tau_0} +\mu_1^2 g_{2,1}^{\tau_1},
\alpha_{i,1}= \mu_1^1 g_{i,1}^{\tau_0} + \mu_1^2 g_{i,1}^{\tau_1}, 
\beta_1= \mu_1^1 g_{1,1}^{\tau_0},~~~~ \beta_{1^{'}}= \mu_1^2 g_{1,1}^{\tau_1}$,
 $  g_{j,1}^{\tau_0}= \frac{L_j}{2S_0}\vec{n}_j \cdot (\kappa_0^T \vec{n}_{O_1O})|O_1O|,~
~~~g_{j,1}^{\tau_1} =\frac{L_{j^{'}}^{\tau_1}}{2S_1}\vec{n}_{j^{'}} \cdot (\kappa_1^T \vec{n}_{O_1O})|O_1O|, j=1,2,i$, \\
 $\mu_1^1+\mu_1^2 = 1$.\\

Let the term  $\beta_1 u_1^h + \beta_{1^{'}} u_{1^{'}}^h $ in (\ref{F1}) disappear to obtain
\begin{equation}\label{F1-exp}
F_i^1 = -\int_{\overline{O_1O}}  \kappa \nabla  u  \cdot \vec{n} ds \approx \alpha_{2,1} u_2^h - \alpha_{i,1} u_i^h,
\end{equation}
and
\begin{eqnarray*}\label{condition-2}
 \mu_1^1 g_{1,1}^{\tau_0} u_1^h + \mu_1^2 g_{1,1}^{\tau_1} u_{1^{'}}^h = 0.
\end{eqnarray*}
Hence,
\begin{eqnarray} \label{mu1-0}
  \mu_1^1 = \frac{ g_{1,1}^{\tau_1} u_{1^{'}}^h }{g_{1,1}^{\tau_1} u_{1^{'}}^h - g_{1,1}^{\tau_0} u_1^h},~~
  \mu_1^2 =  \frac{ -g_{1,1}^{\tau_0} u_1^h }{g_{1,1}^{\tau_1} u_{1^{'}}^h - g_{1,1}^{\tau_0} u_1^h}.
\end{eqnarray}

So far, we have obtained the two-point flux $F_i^1$ expressed as (\ref{F1-exp}) about Nodes $X_2$ and $X_i$.


Similarly, for Nodes $X_i$ and $X_1$, 
\begin{eqnarray}  \label{F2-exp}
F_i^2 = -\int_{\overline{O_2O}}  \kappa \nabla  u  \cdot \vec{n} ds \approx  \alpha_{i,2} u_i^h- \alpha_{1,1} u_1^h,  
\end{eqnarray}
where
$ 
\alpha_{i,2}= \mu_2^1 g_{i,2}^{\tau_0} +\mu_2^2 g_{i,2}^{\tau_2},~~~
\alpha_{1,1}= \mu_2^1 g_{1,2}^{\tau_0} + \mu_2^2 g_{1,2}^{\tau_2}$ ,$\kappa_2^T=\kappa(O_2)^T$,
\begin{eqnarray}\nonumber 
   g_{j,2}^{\tau_0}= \frac{L_j}{2S_0}\vec{n}_j \cdot (\kappa_0^T \vec{n}_{O_2O})|O_2O|, ~~g_{j,2}^{\tau_1} =\frac{L_{j^{'}}^{\tau_2}}{2S_2}\vec{n}_{j^{'}} \cdot (\kappa_2^T \vec{n}_{O_2O})|O_2O|, ~j=1,2,i,\\ \label{mu2-0}
  \mu_2^1 = \frac{ g_{2,2}^{\tau_2} u_{2^{'}}^h }{g_{2,2}^{\tau_2} u_{2^{'}}^h - g_{2,2}^{\tau_0} u_2^h}~~~
  \mu_2^2 = \frac{- g_{2,2}^{\tau_0} u_2^h }{g_{2,2}^{\tau_2} u_{2^{'}}^h - g_{2,2}^{\tau_0} u_2^h}.~~~
 \end{eqnarray}

The flux on Edge $\overline{M_2 OM}_{3}$~ can be written as
\begin{eqnarray} \label{tpflux}
 -\int_{ \overline{M_2OM_3}} \kappa \nabla u_h \cdot \vec{n}_k ds & = &  - \frac{1}{1+\alpha_2}\int_{ \overline{O_2O}}\kappa \nabla u_h \cdot \vec{n}_k ds  \\ \nonumber &~~~~& + \frac{1}{1+\alpha_3}\int_{ \overline{O_3O}}\kappa \nabla u_h \cdot \vec{n}_{k-1} ds.
\end{eqnarray}
Hence, from (\ref{F1-exp}), (\ref{F2-exp}),
\begin{eqnarray} \label{emat}
 -\int_{ \overline{M_2OM_3}} \kappa \nabla u_h \cdot \vec{n}_k ds 
= a_{i,i}^{E_k} u_i^h +a_{i,2}^{E_k}u_2^h + a_{i,1}^{E_k} u_1^h,
 \end{eqnarray}
where 
\begin{eqnarray*}
a_{i,i}^{E_k}= \frac{\alpha_{i,1}+\alpha_{i,2}}{1+\alpha_2},
a_{i,2}=- \frac{\alpha_{2,1}}{1+\alpha_2},
a_{i,1}=- \frac{\alpha_{1,1}}{1+\alpha_3}.
 \end{eqnarray*}

One can easily see that
\begin{eqnarray}  \label{fluxmo}
  -\int_{\overline{M_kO}} \nabla u \cdot (\kappa^T \vec{n}_k) ds = -\frac{1}{1+\alpha_k}\int_{\overline{O_kO}} \nabla u \cdot (\kappa^T \vec{n}_k) ds,~~k=2,3,
  \end{eqnarray}
 where $    \alpha_k=\frac{|X_k^{'}M_k|}{|X_kM_k|}$,
and its discretization is similar to (\ref{fluxmo}). 

For all sub-control volumes $D_i^{i_j} \in b_{X_i}$, from  (\ref{emat}) and (\ref{fluxmo}), one can easily infer that
\begin{eqnarray} \label{diff-operator}
      &  & - \int_{\partial b_{X_i}}  \kappa  \nabla u_h \cdot  \vec{n} ds 
       = - \sum\limits_{j=1}^{m_i}\int_{\partial D_i^{i_j}}  \kappa  \nabla u_h \cdot  \vec{n} ds  \approx \sum_{j=1}^{m_i} d_{i i_j} u_{i_j}^h,
\end{eqnarray}
where
$$
d_{i i_j}=\sum\limits_{D_i^{i_j} \in b_{X_i}} N_{E_k} a_{ij}^{E_k} N_{E_k}^T, 
$$
and $N_{E_k}$ is the assembling matrix consisting of zeros and ones.

We call \eqref{diff-operator} as the PFVE discrete operator for diffusion term.

\section{Discretization of the convection term}

Similar to the FE method, we can obtain the discrete operator for the convection (second) term in (\ref{app-blance}) by this way that all element convection matrices within $b_{X_i}$ are integrated into the total convection matrix $A_c$. It is well known that matrix $A_c$ is always difficult to satisfy the M-matrix property, especially for convection-dominated problems. 

\subsection{first-order discrete operator}

Consider the convective term in Equation (\ref{app-blance}), and notice $u_h\in U_h$, 
\begin{eqnarray} \label{convectterm}
\int_{ b_{X_i}}  {\bf{b}} \cdot  \nabla u_h  d \mathbf{x} \approx  \int_{ b_{X_i}}  {\bf{b}} \cdot  \nabla u_h|_{D_i^{up}}  d \mathbf{x} = \nabla u_h|_{D_i^{up}} \cdot \int_{ b_{X_i}}  {\bf{b}}   d\mathbf{x} .
\end{eqnarray}
where $\nabla u_h|_{D_i^{up}}$ is one constant vector. 

By some numerical integration formula, 
\begin{eqnarray} \label{convectterm2}
\int_{ b_{X_i}}  {\bf{b}}  dx \approx \sum\limits_{k=1}^m \omega_k {\bf b} (X_i^k)|b_{X_i}|,
\end{eqnarray}
where $|b_{X_i}|$ is the area of the dual element $b_{X_i}$, $X_i^k$ and $\omega_k $ are, respectively, the Gaussian integral point and weight.

Hence,
\begin{eqnarray} \label{convect-operator}
\int_{ b_{X_i}}  {\bf{b}} \cdot  \nabla u_h  dx \approx  
\sum\limits_{i_j=1,i,6}^3 c_{ii_j} u_{i_j}^h,
\end{eqnarray}
where 
$$
c_{ii_j}=|b_{X_i}| \sum\limits_{k=1}^m \omega_k {\bf b} (X_i^k)\cdot \nabla \lambda_{i_j},
$$
and $\lambda_{i_j}$ is the shape function of Node $X_{i_j}$ in $\triangle X_iX_6X_1$.

Assume that ${\bf b}$ is one constant vector, and ${\bf b}({X}_i) = \bf{b}_0$, 
\begin{eqnarray}\label{Tabata} 
c_{ij}=|b_{X_i}|  {\bf{b}}_0\cdot \nabla \lambda_{i_j}= \frac{|\Omega_i|}{3}{\bf{b}}_0 \cdot \nabla \lambda_{i_j},
\end{eqnarray}
as $b_{X_i}\in \Omega_h^*$ is the barycenter dual partition, which accords with that of the literature \cite{Tabata}, and it is first-order convergent.

\subsection{Second-order discrete operator}

In this subsection, we will construct one second-order monotonous nonlinear discrete operator. 
\subsubsection{Error estimates on the linear extension function}

We introduce one linear interpolation function in the region $D_i^{up}$ (For convenience, we write $D_i^{up}$ as $D^{up}$ in the following) and its extension function $\tilde{u}^{up}(\mathbf{x})$ in $b_{X_i}$ as follows.
\begin{equation}\label{eq-tau-up-t}
\tilde{u}^{up}(\mathbf{x}) = \sum_{l = 1}^3 P_l^{up}(\mathbf{x}) u(\mathbf{x}_ {i_l}),\forall \mathbf{x} \in b_{X_i},
\end{equation}
where $P_l^{up}(\mathbf{x})  = \lambda_l^{up}(\mathbf{x}) \in \mathcal{P}_1, l =1,2,3$, $\lambda_l^{up}$ is the linear Lagrange interpolation basis function in Element  $\tau_6$ (also denoted as $\tau_i^{up}$), $\mathbf{x}_{i_l} := (x_l, y_l)$, $\mathbf{x}_{i_1} = \mathbf{x}_i$.

For $\tilde{u}^{up}(\mathbf{x})$, we have the following  asymptotic expansion which  can be proved similarly to that in the literature (\cite{Nie3}).

\begin{lemma}\label{err-tau}
  Assume that $\tilde{u}^{up}(\mathbf{x})$, $u(\mathbf{x})\in C^2(b_{X_i})$ ( resp.,  $\nabla \tilde{u}^{up}(\mathbf{x})$, $\nabla    u(\mathbf{x})$ ) are, the extended function (resp., gradient functions ) defined as (\ref{eq-tau-up-t}), the solution of (\ref{eq-PDE-1}) restricted in $b_{X_i}$. Then

(1) For $\tilde{u}^{up}(\mathbf{x})$,
\begin{equation}\label{eq-uupI-u}
\tilde{u}^{up}(\mathbf{x})= u(\mathbf{x})+ g^{{up}}(\mathbf{x})+C_1h^3_i, \forall ~\mathbf{x} \in b_{X_i},
\end{equation}
where $C_1$ is a constant relative to the values $\frac{\partial^3 u}{\partial x^{\alpha} \partial y^{3-\alpha}}, \alpha = 0,1,2,3$, and $h_i$ is maximum length of all triangle edges in $b_{X_i}$,  and
\begin{eqnarray}\label{eq-g-tauup}
&g^{{up}}(\mathbf{x})=\frac{u_{xx}(\mathbf{x}_i)}{2}\sum\limits_{l = 1}^3 P_l^{{up}}(\mathbf{x})(x_l-x)^{2}
+\frac{u_{yy}(\mathbf{x}_i)}{2}\sum\limits_{l = 1}^3 P_l^{{up}}(\mathbf{x})(y_l-y)^2 \nonumber\\
&+ u_{xy}(\mathbf{x}_i)\sum\limits_{l = 1}^3 P_l^{{up}}(\mathbf{x})(x_l-x)(y_l-y) = O(h_i^2).
\end{eqnarray}

(2) For $\nabla \tilde{u}^{up}(\mathbf{x})$,
\begin{equation}\label{eq-graduI-gradu}
      { \nabla \tilde{u}^{up}(\mathbf{x})= \nabla u(\mathbf{x}) + \nabla \tilde{g}^{{up}}(\mathbf{x})+Ch^2_i, \forall \mathbf{x} \in b_{X_i}},
    \end{equation}
    where 
    \begin{eqnarray}\nonumber
      \nabla \tilde{g}^{{up}}(\mathbf{x})
    & =&   \frac{u_{xx}(\mathbf{x}_{i})}{2}\sum_{l=1}^3\nabla P_l^{{up}}(x_l-x)^{2}
  +\frac{u_{yy}(\mathbf{x}_{i})}{2}\sum_{l=1}^3\nabla P_l^{{up}}(y_l-y)^{2}
\\ \label{eq-u-uI-g-1}
  & &+u_{xy}(\mathbf{x}_i)\sum_{l=1}^3\nabla P_l^{{up}}(x_l-x)(y_l-y) = O(h_i).
    \end{eqnarray}
\end{lemma}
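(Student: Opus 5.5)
The approach is to Taylor-expand $u$ about the evaluation point $\mathbf{x}$ at each interpolation node. Then I use the polynomial reproduction properties of the barycentric functions $P_l^{up}=\lambda_l^{up}$, which stay valid for every $\mathbf{x}\in\mathbb{R}^2$ since they are affine, and in particular on all of $b_{X_i}$:
\[
\sum_{l=1}^3 P_l^{up}(\mathbf{x})=1,\qquad \sum_{l=1}^3 P_l^{up}(\mathbf{x})(x_l-x)=0,\qquad \sum_{l=1}^3 P_l^{up}(\mathbf{x})(y_l-y)=0 .
\]

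For part (1), fix $\mathbf{x}\in b_{X_i}$ and write, for each node $\mathbf{x}_{i_l}$ of $\tau_i^{up}$,
\[
u(\mathbf{x}_{i_l})=u(\mathbf{x})+u_x(\mathbf{x})(x_l-x)+u_y(\mathbf{x})(y_l-y)+\tfrac12 D^2u(\mathbf{x})[\mathbf{x}_{i_l}-\mathbf{x}]^2+R_l ,
\]
with $|R_l|\le C|\mathbf{x}_{i_l}-\mathbf{x}|^3\max|D^3u|$. I need $u\in C^3$ near $b_{X_i}$, which the constant $C_1$ in the statement implicitly presumes. All nodes $\mathbf{x}_{i_l}$ and the point $\mathbf{x}$ lie in $\Omega_i$, so $|\mathbf{x}_{i_l}-\mathbf{x}|\le 2h_i$.

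Multiplying by $P_l^{up}(\mathbf{x})$ and summing, the reproduction identities kill the zeroth- and first-order terms beyond $u(\mathbf{x})$. This leaves $u(\mathbf{x})$ plus the quadratic term evaluated with $D^2u(\mathbf{x})$. I then replace $D^2u(\mathbf{x})$ by $D^2u(\mathbf{x}_i)$, at a cost of $O(h_i)\cdot|D^3u|$ times a factor $\sum_l|P_l^{up}(\mathbf{x})||\mathbf{x}_{i_l}-\mathbf{x}|^2=O(h_i^2)$. This gives exactly $g^{up}(\mathbf{x})$ plus an $O(h_i^3)$ remainder.

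The one nonstandard point is that $\mathbf{x}$ may lie outside $\tau_i^{up}$, so $P_l^{up}(\mathbf{x})$ can be negative or exceed $1$. I bound them uniformly. Under the usual shape-regularity of $\Omega_h$, $\mathrm{dist}(\mathbf{x},\tau_i^{up})\le 2h_i$ and $|\nabla\lambda_l^{up}|\le C/\rho_{\tau}$, so $|P_l^{up}(\mathbf{x})|\le C$ on $b_{X_i}$. This makes all remainders $O(h_i^3)$, and the estimate $g^{up}=O(h_i^2)$ follows immediately.

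For part (2), rather than differentiating the expansion in (1), which would need control of the derivative of the remainder, I redo the Taylor argument directly for the gradient. Since $\tilde u^{up}$ is affine, $\nabla\tilde u^{up}=\sum_l\nabla P_l^{up}\,u(\mathbf{x}_{i_l})$. I substitute the same expansion of $u(\mathbf{x}_{i_l})$ about $\mathbf{x}$ and use the differentiated identities
\[
\sum_l\nabla P_l^{up}=0,\qquad \sum_l\nabla P_l^{up}(x_l-x)=(1,0)^T,\qquad \sum_l\nabla P_l^{up}(y_l-y)=(0,1)^T .
\]
These follow from exactness of linear interpolation, differentiating in $\mathbf{x}$ with the point-dependence removed. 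They yield $\nabla u(\mathbf{x})$ plus the quadratic term with weights $\nabla P_l^{up}$. Replacing $D^2u(\mathbf{x})$ by $D^2u(\mathbf{x}_i)$ then gives $\nabla\tilde g^{up}(\mathbf{x})$.

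The errors are now of the form $|\nabla P_l^{up}|\cdot h_i^3=O(h_i^{-1})\cdot h_i^3=O(h_i^2)$, which gives the $Ch_i^2$ term, and $\nabla\tilde g^{up}=O(h_i^{-1}h_i^2)=O(h_i)$. Here $\nabla\tilde g^{up}$ is understood, as in the statement, with $\nabla$ acting only on $P_l^{up}$; it differs from $\nabla g^{up}$ by terms that cancel through the identities above.

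The main obstacle is the uniform control of the extrapolated basis functions and their gradients on all of $b_{X_i}$, not just on $\tau_i^{up}$. I would handle this by assuming shape-regularity and noting that $b_{X_i}\subset\Omega_i$, whose diameter is comparable to the inradius of $\tau_i^{up}$. Everything else is bookkeeping as in \cite{Nie3}.
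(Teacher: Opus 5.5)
Your argument is correct and is the standard Taylor-expansion proof that the paper does not write out but defers to \cite{Nie3}: you expand $u(\mathbf{x}_{i_l})$ about $\mathbf{x}$, use the reproduction identities for $P_l^{up}$ and $\nabla P_l^{up}$, and freeze the second derivatives at $\mathbf{x}_i$. Your explicit additions are exactly what the paper's statement leaves implicit: $C^3$ regularity (needed because $C_1$ depends on third derivatives, which the stated $C^2$ hypothesis does not supply), shape regularity to bound the extrapolated $P_l^{up}$ on $b_{X_i}$, and redoing the expansion for the gradient instead of differentiating the remainder in (1).
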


 From Lemma \ref{err-tau},
      $$
      |\nabla \tilde{u}^{up}(\mathbf{x}) - \nabla u(\mathbf{x})| = O(h_i), \forall \mathbf{x} \in b_{X_i}.
      $$
According to the fact that $\nabla \tilde{u}^{up}(\mathbf{x})$ defined by (\ref{eq-tau-up-t}) is one constant vector,
  $$
  \nabla {u}^{up} =  \nabla \tilde{u}^{up}|_{D_i^{up}} = \nabla \tilde{u}^{up}(\mathbf{x}), \forall \mathbf{x} \in b_{X_i}.
 $$

Therefore, we have the following result.
\begin{corollary}\label{err-grad-up}
Assume that 
 $u(\mathbf{x})\in C^2(b_{X_i})$ and
$  \nabla u^{up}(\mathbf{x}) = \sum\limits_{l = 1}^3 \nabla \lambda_l^{up} u(\mathbf{x}_ {i_l})$. Then
  \begin{equation}\label{eq-FE-up-L-2-p1}
    |\nabla u^{up}-\nabla u(\mathbf{x})| = O(h_i),\forall \mathbf{x}\in b_{X_i}.
  \end{equation}
  \end{corollary}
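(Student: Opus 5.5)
The plan is to derive Corollary \ref{err-grad-up} directly from part (2) of Lemma \ref{err-tau}, after noting that $\nabla u^{up}$ is a constant vector that coincides with $\nabla \tilde{u}^{up}(\mathbf{x})$ at every $\mathbf{x}\in b_{X_i}$. First, by (\ref{eq-tau-up-t}), $\tilde{u}^{up}$ is a single affine polynomial on all of $b_{X_i}$, since its coefficients $\lambda_l^{up}$ are the fixed barycentric coordinates of $\tau_i^{up}$ extended linearly. Hence
\[
\nabla \tilde{u}^{up}(\mathbf{x})=\sum_{l=1}^3 \nabla\lambda_l^{up}\, u(\mathbf{x}_{i_l})=\nabla u^{up}
\]
for every $\mathbf{x}\in b_{X_i}$. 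Substituting this into (\ref{eq-graduI-gradu}) gives $\nabla u^{up}-\nabla u(\mathbf{x})=\nabla\tilde{g}^{up}(\mathbf{x})+O(h_i^2)$. It therefore suffices to show $|\nabla\tilde{g}^{up}(\mathbf{x})|=O(h_i)$ uniformly on $b_{X_i}$.

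For this bound I will use (\ref{eq-u-uI-g-1}). Each term has the form $(\text{second derivative of } u \text{ at } \mathbf{x}_i)\cdot\nabla\lambda_l^{up}\cdot(\text{quadratic monomial in } x_l-x,\ y_l-y)$. The second derivatives are bounded because $u\in C^2$. Lemma \ref{grad-lamda} gives $|\nabla\lambda_l^{up}|=L_l/(2S^{up})$. Under a shape-regularity (minimum angle) assumption on $\Omega_h$, this is $O(h_i^{-1})$. Both $\mathbf{x}$ and $\mathbf{x}_{i_l}$ lie in $\Omega_i$, whose diameter is at most $2h_i$, so $|x_l-x|,|y_l-y|\le 2h_i$ and each quadratic factor is $O(h_i^2)$. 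Multiplying gives $O(h_i^{-1})\cdot O(h_i^2)=O(h_i)$ per term. Summing over the three values of $l$ and the three second-derivative terms keeps the bound at $O(h_i)$.

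The main obstacle is the regularity gap. Lemma \ref{err-tau}(2), as stated, implicitly uses third derivatives to get the $O(h_i^2)$ remainder, while the corollary assumes only $u\in C^2$. To avoid this, I will give a direct argument as a fallback. Write $u(\mathbf{x}_{i_l})=u(\mathbf{x})+\nabla u(\mathbf{x})\cdot(\mathbf{x}_{i_l}-\mathbf{x})+R_l$ with $|R_l|\le C\|u\|_{C^2}h_i^2$ by Taylor's theorem. Because the linear interpolation reproduces affine functions, $\sum_l\nabla\lambda_l^{up}=0$ and $\sum_l\nabla\lambda_l^{up}(\mathbf{x}_{i_l}-\mathbf{x})^T=I$. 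Hence $\nabla u^{up}-\nabla u(\mathbf{x})=\sum_l\nabla\lambda_l^{up}R_l$. This sum is $O(h_i^{-1}h_i^2)=O(h_i)$ by the same shape-regularity bound, and this route needs only $C^2$. I will also state explicitly the implicit assumption that the triangles in $\Omega_i$ are shape regular, so that the constant is independent of $h_i$.
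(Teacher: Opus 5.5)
Your primary route is the paper's own argument. The text just before the corollary invokes Lemma~\ref{err-tau}(2) to get $|\nabla\tilde{u}^{up}(\mathbf{x})-\nabla u(\mathbf{x})|=O(h_i)$. It then notes that $\nabla\tilde{u}^{up}$ is a constant vector equal to $\nabla u^{up}$ throughout $b_{X_i}$. Your check that each term of $\nabla\tilde{g}^{up}$ is $O(h_i^{-1})\cdot O(h_i^{2})$ just supplies the bound the lemma asserts without proof.

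Your fallback is a genuinely different and more elementary proof. From affine reproduction ($\sum_l\nabla\lambda_l^{up}=0$ and $\sum_l\nabla\lambda_l^{up}(\mathbf{x}_{i_l}-\mathbf{x})^{T}=I$) you get $\nabla u^{up}-\nabla u(\mathbf{x})=\sum_l\nabla\lambda_l^{up}R_l$, with second-order Taylor remainders $R_l=O(h_i^2)$. Only second derivatives of $u$ enter, so this matches the corollary's $C^2$ hypothesis. The paper's route instead inherits the $O(h_i^2)$ remainder of Lemma~\ref{err-tau}, whose constant depends on third derivatives, so strictly it needs $u\in C^3$.

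What the paper's route buys is the explicit leading term $\nabla\tilde{g}^{up}$, which it reuses to build the correction $\beta_i$ in (\ref{eq-beta-2}). For the corollary alone, your Taylor argument is cleaner. Your explicit shape-regularity assumption is also something the paper leaves implicit; it is needed for $|\nabla\lambda_l^{up}|=O(h_i^{-1})$ and for the diameter of $\tau_i^{up}$ to be comparable to $h_i$.

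One small refinement applies to both routes. The vertices $\mathbf{x}_{i_2},\mathbf{x}_{i_3}$ of $\tau_i^{up}$ lie outside $b_{X_i}$, so the hypothesis really has to be $u\in C^2$ on $\tau_i^{up}\cup b_{X_i}$ (e.g.\ on $\Omega_i$). Also, if $\Omega_i$ is not convex, the segment from $\mathbf{x}$ to $\mathbf{x}_{i_l}$ may leave it. You can avoid this by expanding about $\mathbf{x}_i$ along the edges of $\tau_i^{up}$, then comparing $\nabla u(\mathbf{x}_i)$ with $\nabla u(\mathbf{x})$ along $[\mathbf{x}_i,\mathbf{x}]$. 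That segment stays in $b_{X_i}$ because each sub-control volume is convex and contains $X_i$, so every segment used lies in the region where $u$ is $C^2$.
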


\subsubsection{ Error estimates on the discrete convection operator}

The convection operator 
\begin{equation}\label{eq-PDE-b}
  (L^cu,\psi_i) := (\mathbf{b} \cdot \nabla u, \psi_i) = \int_{b_{X_i}}\mathbf{b} \cdot\nabla u  d\mathbf{x},~~\psi_i \in V_h.
\end{equation}

The FVE discrete convection operator
\begin{equation}\label{eq-FE-up-L}
  (L_h^cu_h,\psi_i) =  \sum_{l=1}^3c_{ii_l}u_{i_l},
\end{equation}
where
\begin{equation}\label{eq-FE-up-c}
  c_{ii_l}  = |b_{X_i}| \sum\limits_{k=1}^m \omega_k {\bf b} (X_i^k)\cdot \nabla \lambda_{i_l},
\end{equation}
and $i_l, l = 1, 2, 3$ are the global indices of three nodes of Element $\tau_i^{up}$, 
and $\lambda_{i_l}$ is the shape function of Node $X_{i_l}\in\tau_i^{up}$, $X_i^k \in D_i^{up}$ is the Gaussian quadrature point.

In the following, we estimate the difference $ (L_h^cu(\mathbf{x}_{i}),\psi_i)-(L^cu,\psi_i)$.

\begin{proposition}\label{discret-conv}{ Assume that  $u(\mathbf{x})\in C^2(b_{X_i})$ and $(L^cu,\psi_i), (L_h^cu(\mathbf{x}_{i}), \psi_i)$ are, respectively, defined by (\ref{eq-PDE-b}), (\ref{eq-FE-up-L}). Then
\begin{eqnarray}\label{eq-FE-up-L-2}
 |(L_h^cu(\mathbf{x}_{i}),\psi_i)-(L^cu,\psi_i)| =  O(h_i^2), ~\mathbf{x}\in b_{X_i}.
\end{eqnarray}
Especially for a constant vector $\mathbf{b}$,
\begin{eqnarray}\label{eq-FE-up-L-5}
 |(L_h^cu(\mathbf{x}_{i}),\psi_i)-(L^cu,\psi_i)| =  O(h_i^3), ~\mathbf{x}\in b_{X_i}.
\end{eqnarray}}
\end{proposition}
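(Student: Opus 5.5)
The plan is to write the difference as a quadrature error plus a gradient-replacement error. Applied to the interpolant of the exact solution, the discrete operator reads
\begin{equation*}
(L_h^cu(\mathbf{x}_i),\psi_i)=\nabla u^{up}\cdot |b_{X_i}|\sum_{k=1}^m\omega_k\mathbf{b}(X_i^k),
\qquad
\nabla u^{up}=\sum_{l=1}^3\nabla\lambda_l^{up}\,u(\mathbf{x}_{i_l}).
\end{equation*}
We split
\begin{equation*}
(L_h^cu,\psi_i)-(L^cu,\psi_i)=\nabla u^{up}\cdot\Big(|b_{X_i}|\sum_{k}\omega_k\mathbf{b}(X_i^k)-\int_{b_{X_i}}\mathbf{b}\,d\mathbf{x}\Big)+\int_{b_{X_i}}\mathbf{b}\cdot\big(\nabla u^{up}-\nabla u\big)\,d\mathbf{x}=:I_1+I_2 .
\end{equation*}

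For $I_1$, we use that $\mathbf{b}\in W^{1,\infty}$. The quadrature rule is exact for constants and has total weight $|b_{X_i}|$, so the bracket is bounded by $C\|\nabla\mathbf{b}\|_\infty h_i|b_{X_i}|=O(h_i^3)$. Since $|\nabla u^{up}|$ is bounded by Corollary \ref{err-grad-up}, this gives $I_1=O(h_i^3)$. When $\mathbf{b}$ is constant, $I_1=0$ exactly.

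For $I_2$, Corollary \ref{err-grad-up} gives $|\nabla u^{up}-\nabla u|=O(h_i)$ pointwise on $b_{X_i}$. Because $|b_{X_i}|=O(h_i^2)$, this yields $|I_2|\le \|\mathbf{b}\|_\infty\,O(h_i)\,|b_{X_i}|=O(h_i^3)$, and also the weaker $O(h_i^2)$ bound. So (\ref{eq-FE-up-L-2}) holds, and in fact with room to spare when the error is measured as an integrated quantity. If the intended normalization is per unit area, i.e. the difference divided by $|b_{X_i}|$, the general case gives $O(h_i)$ from $I_2$. I would therefore check the normalization against the paper's convention, and I read (\ref{eq-FE-up-L-2}) as the integrated statement.

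The sharper constant-$\mathbf{b}$ bound (\ref{eq-FE-up-L-5}) is where Lemma \ref{err-tau}(2) enters, and I expect it to be the main obstacle. Here
\begin{equation*}
I_2=\mathbf{b}_0\cdot\int_{b_{X_i}}\big(\nabla\tilde g^{up}(\mathbf{x})+O(h_i^2)\big)\,d\mathbf{x}.
\end{equation*}
The remainder contributes $O(h_i^2)\cdot O(h_i^2)=O(h_i^4)$. The leading term $\nabla\tilde g^{up}$ is an explicit quadratic-coefficient expression of size $O(h_i)$, so its integral over $b_{X_i}$ is bounded by $O(h_i)\,|b_{X_i}|=O(h_i^3)$, which is exactly the claimed order. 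The first thing I would verify is whether the integral of $\nabla\tilde g^{up}$ over the barycentric dual cell has any cancellation that would give a better rate. That is not needed for the stated $O(h_i^3)$, but it explains the exponent gap relative to the general case: there the quadrature error $I_1$ would only be $O(h_i^2)$ if $\mathbf{b}$ were merely bounded and not Lipschitz, which is consistent with the two stated rates. Combining the bounds on $I_1$ and $I_2$ completes both estimates.
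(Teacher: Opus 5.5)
Your proof is correct and uses the same splitting as the paper. Your $I_2$ is the paper's estimate (\ref{eq-FE-up-L-2-p2}), obtained exactly as you do from the pointwise $O(h_i)$ gradient estimate and $|b_{X_i}|=O(h_i^2)$, and your $I_1$ is the quadrature term (\ref{eq-FE-up-L-2-p3}). For constant $\mathbf{b}$ that term vanishes, which gives (\ref{eq-FE-up-L-5}) at once, so your detour through Lemma \ref{err-tau}(2) is unnecessary and is not an obstacle.

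The one real difference is how you bound $I_1$ for variable $\mathbf{b}$. You pair the quadrature error with the whole vector $\nabla u^{up}=\sum_l\nabla\lambda^{up}_{i_l}u(\mathbf{x}_{i_l})=O(1)$. The paper instead bounds each $\nabla\lambda^{up}_{i_l}=O(h_i^{-1})$ separately and loses a factor $h_i$. That termwise bound, not a merely bounded $\mathbf{b}$, is the source of the paper's weaker $O(h_i^2)$. Your grouping actually yields $O(h_i^3)$ for Lipschitz $\mathbf{b}$ as well.
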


 \begin{proof}
Firstly, we prove
  \begin{equation}\label{eq-FE-up-L-2-p2}
   |(L^cu,\psi_i)-(L^cu^{up},\psi_i)| = O(h_i^3),
  \end{equation}
where $u^{up}$ is the interpolation function in $D_i^{up}$.

As a fact, from (\ref{eq-PDE-b}),
 \begin{align*}
   (L^cu,\psi_i) - (L^c u^{up},\psi_i)
    &=  \int_{\Omega_i}\mathbf{b} \cdot\nabla u \psi_i d\mathbf{x}
     -\int_{\Omega_i}\mathbf{b} \cdot\nabla u^{up} \psi_i d\mathbf{x}\\
    &= \int_{\Omega_i}\mathbf{b}\cdot(\nabla u-\nabla u^{up}) \psi_i d\mathbf{x}
\\
    &= \int_{b_{X_i}}\mathbf{b}\cdot(\nabla u-\nabla u^{up}) d\mathbf{x}.
   \end{align*}
By Corollary \ref{err-grad-up}, 
   $$
   \mathbf{b}\cdot(\nabla u-\nabla u_{\tau^{up}})  = O(h_i).
   $$
Hence, (\ref{eq-FE-up-L-2-p2}) holds true from the fact $|b_{X_i}| = O(h_i^2)$.

 Then, we prove
  \begin{equation}\label{eq-FE-up-L-2-p3}
   |(L^c_hu(\mathbf{x}_{i}),\psi_i)-(L^cu^{up},\psi_i)| = O(h_i^2).
  \end{equation}

In fact, from (\ref{eq-PDE-b}) and (\ref{eq-FE-up-L}),
  \begin{align}
    &(L^c_hu(\mathbf{x}_{i}),\psi_i)-(L^cu^{up},\psi_i) \nonumber \\
   =&\sum_{l=1}^3c_{ii_l}^{up}u(\mathbf{x}_{i_l}) - \int_{\Omega_i}\mathbf{b}\cdot\nabla u^{up} \psi_i d\mathbf{x} \nonumber \\
   =&\sum_{l=1}^3  (|b_{X_i}|\sum_{k=1}^m \omega_k \mathbf{b}(\mathbf{x}_i^k) \cdot \nabla \lambda^{up}_{i_l}) u(\mathbf{x}_{i_l})  - \sum_{l = 1}^3  \int_{b_{X_i}}\mathbf{b}  dx \cdot\nabla \lambda_{i_l}^{up}  u(\mathbf{x}_ {i_l}) \nonumber \\
  =&\sum_{l=1}^3   (|b_{X_i}| \sum_{k=1}^m \omega_k \mathbf{b}(\mathbf{x}_i^k)-  \int_{b_{X_i}}\mathbf{b}  dx) \cdot\nabla \lambda_{i_l}^{up} u(\mathbf{x}_ {i_l})  . \label{asd-1}
  \end{align}

If $\mathbf{b}$ is one constant vector. Then
  \begin{equation}\label{aa1}
  (L^c_hu(\mathbf{x}_{i}),\phi_i)-(L^cu^{up},\phi_i) = 0.
  \end{equation}
Otherwise, by some proper composite Gaussian quadrature,
  $$
  \int_{b_{X_i}}\mathbf{b}  dx-|b_{X_i}| \sum_{k=1}^m \omega_k \mathbf{b}(\mathbf{x}_i^k)=O(h_i^{2+l}), l\ge 1,
  $$
together with $\nabla \lambda^{\tau^{up}}_{l} = O(h_i^{-1})$,
(\ref{eq-FE-up-L-2-p3}) holds true from (\ref{asd-1}).

Therefore, from the triangle inequality, (\ref{eq-FE-up-L-2}) is true from (\ref{eq-FE-up-L-2-p2}) and (\ref{eq-FE-up-L-2-p3}).
(\ref{eq-FE-up-L-5}) is true from (\ref{eq-FE-up-L-2-p2}) and (\ref{aa1}).

This completes the proof of this proposition.
\end{proof}

\subsubsection{Second-order discrete operator}
To construct the second-order discrete operator, some nonlinear coefficients $\beta_{i,l} = \beta_{i,l}(u_{i_1},u_{i_2},\cdots,u_{i_{m_i}}),l =1,2,3$, are introduced for (\ref{eq-FE-up-L}), we have
    \begin{eqnarray*}
     (\tilde{L}_h^cu_i,\phi_i) = \sum_{l=1}^3 (1+\beta_{i,l} h_i^2) c_{ii_l} u_{i_l},
    \end{eqnarray*}
where $i_l, l = 1, 2, 3$ are the global indices of three nodes of Element $\tau_i^{up}$ and
\begin{eqnarray}\label{cij1}
   c_{ii_l}  = |b_{X_i}| \sum\limits_{k=1}^m \omega_k {\bf b} (X_i^k)\cdot \nabla \lambda_{i_l}.
\end{eqnarray}

Assume that the perturbed interpolation function $\bar{u}^{up}(\mathbf{x})\in \mathcal{P}_1$,
  \begin{eqnarray}\label{utar}
    \bar{u}^{up}(\mathbf{x}) = \sum_{l = 1}^3 (1+\beta_{i,l}h^2_i )P_{i}^{up}(\mathbf{x}) u(\mathbf{x}_ {i_l}).
  \end{eqnarray}
From Lemma 2,
  \begin{eqnarray*}
      \bar{u}^{up}(\mathbf{x})- u(\mathbf{x}) = g^{up}(\mathbf{x}) + \sum_{l = 1}^3 (\beta_{i,l}h^2_i )P_{i}^{up}(\mathbf{x}) u(\mathbf{x}_ {i_l})+O(h^3_i), \forall \mathbf{x} \in b_{X_i},
  \end{eqnarray*}
  where $g^{up}(\mathbf{x})$ is defined by (\ref{eq-g-tauup}).

  The gradient error expansion
  \begin{eqnarray}\label{eq-u-u-up-I-beta}
      \nabla \bar{u}^{up}(\mathbf{x})- \nabla u(\mathbf{x}) = \nabla \tilde{g}^{up}(\mathbf{x})+ \sum_{l = 1}^3 (\beta_{i,l}h^2_i )\nabla P_{i}^{up} u(\mathbf{x}_ {i_l})+O(h^2_i), \forall \mathbf{x} \in b_{X_i},~~
  \end{eqnarray}
  where  $ \nabla \tilde{g}^{up}(\mathbf{x})$ is defined by (\ref{eq-u-uI-g-1}).

   \vskip 0.5cm
   \begin{theorem} Assume that $u\in C^2(b_{X_i}), 1 \leq i \leq N$. Then
    \begin{equation}\label{eq-no-up-L-r}
   ({L}^c\bar{u}^{up},\psi_i) -(L^cu,\psi_i) = r(\beta_{i,l})+O(h^4_i),
  \end{equation}
where $\bar{u}^{up} \in U_h$ is defined by (\ref{utar}),
   \begin{equation}\label{eq-no-up-L-r-1}
  r(\beta_{i,l})  = \int_{b_{X_i}}\mathbf{b}\cdot { \nabla \tilde{g}^{up} (\mathbf{x})}  d\mathbf{x} +
   h_i^2 \sum_{l = 1}^3 \beta_{i,l}  (\nabla \lambda_l^{up} \cdot \int_{b_{X_i}}\mathbf{b} d\mathbf{x}) {u(\mathbf{x}_ {i_l})}.
  \end{equation}
 \end{theorem}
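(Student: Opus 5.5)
The plan is to write the difference of the two continuous convection functionals as a single integral over the control volume and then substitute the gradient error expansion (\ref{eq-u-u-up-I-beta}). Since $\psi_i$ is the characteristic function of $b_{X_i}$, we have, as in the proof of Proposition \ref{discret-conv},
\begin{equation*}
({L}^c\bar{u}^{up},\psi_i) -(L^cu,\psi_i) = \int_{b_{X_i}}\mathbf{b}\cdot\big(\nabla \bar{u}^{up}(\mathbf{x})-\nabla u(\mathbf{x})\big)\, d\mathbf{x}.
\end{equation*}
Inserting (\ref{eq-u-u-up-I-beta}) splits this integral into three contributions: the $\nabla\tilde g^{up}$ term, the perturbation term, and the remainder.

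The first two contributions should reproduce $r(\beta_{i,l})$ exactly. The integral of $\mathbf{b}\cdot\nabla\tilde g^{up}$ is the first term of (\ref{eq-no-up-L-r-1}) as written. For the perturbation term, each $\nabla P_l^{up}=\nabla\lambda_l^{up}$ is a constant vector, as is each nodal value $u(\mathbf{x}_{i_l})$. So $\beta_{i,l}h_i^2\nabla\lambda_l^{up}u(\mathbf{x}_{i_l})$ can be pulled out of the integral. This gives $h_i^2\sum_l\beta_{i,l}\,(\nabla\lambda_l^{up}\cdot\int_{b_{X_i}}\mathbf{b}\,d\mathbf{x})\,u(\mathbf{x}_{i_l})$, which is the second term. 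Throughout, the coefficients $\beta_{i,l}$ are treated as $O(1)$ quantities that do not depend on $\mathbf{x}$.

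It remains to bound the remainder, and the order has to be read off carefully. Lemma \ref{err-tau}(2) gives a pointwise $O(h_i^2)$ remainder in the gradient, and $|b_{X_i}|=O(h_i^2)$, so integrating against the bounded $\mathbf{b}\in W^{1,\infty}$ yields $O(h_i^4)$. This is the $O(h_i^4)$ in (\ref{eq-no-up-L-r}). The same argument as for (\ref{eq-FE-up-L-2-p2}) applies: a pointwise bound times the area of the dual element.

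I expect the main obstacle to be justifying that the gradient remainder in (\ref{eq-u-u-up-I-beta}) is genuinely $O(h_i^2)$ uniformly on $b_{X_i}$. The remainder comes from a Taylor expansion about $\mathbf{x}_i$ carried to third order, followed by differentiation of the $P_l^{up}$-weighted sum, which costs a factor $h_i^{-1}$ through $\nabla\lambda_l^{up}=O(h_i^{-1})$. It therefore needs third derivatives of $u$ (as $C_1$ in Lemma \ref{err-tau} indicates) and shape regularity of the mesh, so that $\mathbf{x}$ and the nodes $\mathbf{x}_{i_l}$ stay within distance $O(h_i)$. I would reprove the expansion by writing $u(\mathbf{x}_{i_l})=u(\mathbf{x})+\nabla u(\mathbf{x})\cdot(\mathbf{x}_{i_l}-\mathbf{x})+\tfrac12 D^2u(\mathbf{x}_i)[\mathbf{x}_{i_l}-\mathbf{x}]^2+O(h_i^3)$. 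Then I would use $\sum_l P_l^{up}=1$ and $\sum_l P_l^{up}(\mathbf{x}_{i_l}-\mathbf{x})=0$, together with their gradient identities, valid on all of $b_{X_i}$ because the $P_l^{up}$ are affine. With these identities in hand, I would accept the remainder bound under the stated regularity, interpreting $C^2$ as permitting the bounded third derivatives already used in Lemma \ref{err-tau}.
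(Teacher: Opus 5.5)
Your proposal is correct and follows the paper's proof essentially step for step. You write the difference as $\int_{b_{X_i}}\mathbf{b}\cdot(\nabla\bar u^{up}-\nabla u)\,d\mathbf{x}$, insert (\ref{eq-u-u-up-I-beta}), pull the constants $\beta_{i,l}h_i^2\nabla\lambda_l^{up}u(\mathbf{x}_{i_l})$ out of the integral to obtain $r(\beta_{i,l})$, and bound the remainder by $O(h_i^2)\cdot|b_{X_i}|=O(h_i^4)$. Your regularity caveat is also accurate: the $O(h_i^2)$ gradient remainder inherited from Lemma \ref{err-tau} needs bounded third derivatives of $u$ on a region containing both $b_{X_i}$ and the vertices of $\tau_i^{up}$, together with shape regularity, so the stated hypothesis $u\in C^2(b_{X_i})$ is weaker than what the argument actually uses.
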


\begin{proof}
  From (\ref{eq-PDE-b}) and (\ref{eq-u-u-up-I-beta}),
  \begin{align*}
   &({L}^c\bar{u}^{up},\psi_i)-(L^cu,\psi_i)\\
    = & \int_{\Omega_i}\mathbf{b} \cdot \nabla \bar{u}^{up} \psi_i d\mathbf{x} - \int_{\Omega_i}\mathbf{b} \cdot\nabla u  \psi_i d\mathbf{x} \\
    = &\int_{\Omega_i}\mathbf{b}\cdot(\nabla \bar{u}^{up}-\nabla u) \phi_i d\mathbf{x}\\
    = &\int_{\Omega_i}\mathbf{b}\cdot(\nabla \tilde{g}^{up}(\mathbf{x}) + \sum_{l = 1}^3 (\beta_{i,l}h^2_i )\nabla \lambda_l^{up} u(\mathbf{x}_ {i_l})+O(h^2_i)) \psi_i d\mathbf{x}\\
    = &\int_{b_{X_i}}\mathbf{b}\cdot(\nabla \tilde{g}^{up}(\mathbf{x}) + \sum_{l = 1}^3 (\beta_{i,l}h^2_i )\nabla \lambda_l^{up} u(\mathbf{x}_ {i_l})+O(h^2_i)) d\mathbf{x}\\
   = & r(\beta_{i,l})+O(h^4_i),
   \end{align*}
   where $r(\beta_{i,l})$ is defined by (\ref{eq-no-up-L-r-1}).

Noticing that $|b_{X_i}| = O(h^2_i)$, and utilizing (\ref{eq-u-uI-g-1}), one can obtain
   $$
   r(\beta_{i,l}) = O(h_i^3).
   $$
   Hence, (\ref{eq-no-up-L-r}) holds true.

This completes the proof of this theorem.
\end{proof}

The current goal is
  \begin{equation}\label{conclude-1}
   ({L}^c\bar{u}_{\tau^{up}},\phi_i) -(L^cu,\phi_i) = O(h^4_i).
  \end{equation}

According to (\ref{eq-no-up-L-r}) and (\ref{conclude-1}), we can obtain $\beta_{i,l}$ from
  \begin{eqnarray*} \label{eq-beta-6}
   \int_{b_{X_i}}\mathbf{b}\cdot {\nabla \tilde{g}^{up} (\mathbf{x})} d\mathbf{x}+
   h_i^2 \sum_{l = 1}^3 \beta_{i,l}  (\nabla \lambda_l^{up} \cdot \int_{b_{X_i}}\mathbf{b} d\mathbf{x}) {u(\mathbf{x}_ {i_l})} = 0.
  \end{eqnarray*}
If a special case: $\beta_{i,2} = \beta_{i,3} = 0$, is considered, then
    \begin{eqnarray*}
   \int_{b_{X_i}}\mathbf{b}\cdot { \nabla \tilde{g}^{up} (\mathbf{x})}  d\mathbf{x} +
   h^2_i \beta_{i,1}  \nabla \lambda_1^{up} \cdot \int_{b_{X_i}}\mathbf{b} d\mathbf{x} {u(\mathbf{x}_{i_1})} = 0.
  \end{eqnarray*}
   Hence,
  \begin{eqnarray}\label{eq-beta-2}
    \beta_i := \beta_{i,1} = -\frac{\int_{b_{X_i}}\mathbf{b}\cdot { \nabla \tilde{g}^{up} (\mathbf{x})} d\mathbf{x}}{h^2_i (\nabla \lambda_1^{up} \cdot \int_{b_{X_i}}\mathbf{b} d\mathbf{x}){u(\mathbf{x}_ {i_1})}}.
  \end{eqnarray}

 To obtain $\beta_i$, we have to prepare the values of ${u(\mathbf{x}_ {i_1})}$ and $\nabla \tilde{g}^{up}(\mathbf{x})$. 
Interpolate or perform the least-square fitting to approximate $u(\mathbf{x}), \mathbf{x}=(x,y)\in b_{X_i}$ on the data $(\mathbf{x}_{il}, u_{il})$ with
$$
 u(\mathbf{x}) \approx  P_2^u(x,y) = \sum_{j+k =0}^2 b_{jk}x^jy^k,
$$
where $b_{jk}$ are the coefficients.

Thus, we obtained one second-order nonlinear discrete operator for the convective term,  which satisfies (\ref{conclude-1}),
\begin{equation}\label{eq-non-up}
 (\tilde{L}_h^cu_i,\phi_i) = (1+\beta_i(u_{i_1},u_{i_2},\cdots,u_{i_{m_i}})h^2_i) c_{ii_1} u_{i_1} +  \sum_{l=2}^3 c_{ii_l} u_{i_l}, 
\end{equation}
where $c_{ii_l}$, $\beta_i$ is defined by (\ref{cij1}),(\ref{eq-beta-2}), respectively. 

\subsection{MFVE scheme}

From \eqref{app-blance}, (\ref{diff-operator}) and (\ref{convect-operator}), we can obtain one first-order monotonous finite volume element scheme (called as MFVE-1 Scheme) in the following
\begin{eqnarray} \label{firstshceme}
       \sum_{j=1}^{m_i} d_{i i_j} u_{i_j}^h + \sum\limits_{i_l=1,i,6}^3 c_{ii_l} u_{i_j}^h=f_i,
\end{eqnarray}
where  $d_{ii_j}$ and $c_{i i_j}$ are defined in  (\ref{diff-operator}) and (\ref{convect-operator}), respectively, and
\begin{eqnarray}\label{fvector}
f_i = \int_{b_{X_i}} f  d\mathbf{x}.
\end{eqnarray}

From \eqref{app-blance}, \eqref{diff-operator} and \eqref{eq-non-up}, one second-order monotonous finite volume element scheme (called as MFVE-2 Scheme) in the following
\begin{equation}\label{eq-non-up-1}
 \sum_{j=1}^{m_i} d_{i i_j} u_{i_j}+ c_{ii_1}^{up}u_{i_1}^h + \sum_{i_l=1,6} c_{ii_l}u_{i_l}=f_i,
\end{equation}
 where
$$c_{ii_1}^{up}  := (1+\beta_i(u_{i_1},u_{i_2},\cdots,u_{i_{m_i}})h^2_i)c_{ii_1}^{up}.
$$

One can see that the sign properties of above coefficients depend on $\beta_i$. 

If $\beta_i \ge 0$, we have
\begin{equation}\label{beta>0}
  c_{ii_1}^{up}  > 0,~~c_{ii_l} \le 0,l =2,3,~~c_{ii_1}^{up} + \sum_{l = 2}^3 c_{ii_l} \ge 0.
\end{equation}

If $\beta_i < 0$, then one parameter is introduced  
  $$
\alpha > \max\{-c_{ii_1}^{up},0\},
 $$ 
such that 
\begin{equation*}
\sum_{j=1}^{m_i} d_{i i_j} u_{i_j} +  (c_{ii_1}^{up}+\alpha) u_{i_1} +  \sum_{l=2}^3 c_{ii_j} u_{i_j}  = f_i + \alpha u_{i_1}.
\end{equation*}
This yields
\begin{equation}\label{beta<0}
  c_{ii_1}^{up} + \alpha > 0,~~c_{ii_l} \le 0,l =2,3,~~c_{ii_1}^{up} + \alpha + \sum_{l = 2}^3 c_{ii_1} \ge 0.
\end{equation}

Let the global discrete system of scheme \eqref{eq-non-up-1} on $\Omega_h$ be   denoted by
\begin{eqnarray}\label{AdAc-Tabate-SEP}
  A^2 u_2^h = f,~~A^2 = A_d +A_c^2,
\end{eqnarray}
where $A^2 = (a_{ij}^2)_{N \times N}$ with $a_{ij}^2 = d_{ij} + c_{ij}^2$, $A^d = (d_{ij})_{N \times N}$ and $A^c = (c_{ij}^2)_{N \times N}$ are the coefficient matrices corresponding to the diffusion term and convection term corresponding to  MFVE-2 scheme.

\begin{theorem}\label{convent-2}
Assume that $\Omega_h$ is a quasi-uniform mesh with size $h$, and the matrix $A_d$ defined in \eqref{AdAc-Tabate-SEP} is an $M$-matrix, and that the solution $u \in C^2(\Omega) \cap H^1(\Omega)$ of problem \eqref{eq-PDE-1}. Then, the approximate solution  $u_h^{2}$ of  MFVE-2 scheme is monotonous and satisfies 
\begin{equation}\label{eq-FE-up-L-2-p1}
||u-u_h^{2}||_{L_2} = O(h^2).
\end{equation}
\end{theorem}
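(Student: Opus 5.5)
The proof splits into two parts: monotonicity of the MFVE-2 scheme via M-matrix theory, and an $L_2$ error bound via consistency plus stability.

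\textbf{Monotonicity.} I will show that $A^2=A_d+A_c^2$ in \eqref{AdAc-Tabate-SEP} is an $M$-matrix. By hypothesis $A_d$ is an $M$-matrix, so its off-diagonal entries are nonpositive. Since $A_d$ is the matrix of a pure diffusion operator, it also has nonnegative row sums, strict on rows adjacent to $\partial\Omega$.

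For the convection part, each row $i$ of $A_c^2$ touches only the three nodes of $\tau_i^{up}$. Because $D_i^{up}$ is chosen against the direction of $\mathbf{b}(\mathbf{x}_i)$, Lemma \ref{grad-lamda} gives the signs of ${\bf b}_0\cdot\nabla\lambda_{i_l}$. Using \eqref{beta>0} when $\beta_i\ge0$, or the shifted form \eqref{beta<0} when $\beta_i<0$, the diagonal convection entry is positive, the two off-diagonal convection entries are nonpositive, and the row sum is nonnegative.

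Adding the two matrices preserves these sign patterns. Hence $A^2$ is a $Z$-matrix with nonnegative row sums, strictly positive on boundary-adjacent rows. Irreducibility comes from the connectivity of the mesh. So $A^2$ is irreducibly diagonally dominant, which makes it an $M$-matrix with $(A^2)^{-1}\ge0$.

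This gives monotonicity directly: $f\ge0$ and $g\ge0$ imply $u_h^2\ge0$. The nonlinearity through $\beta_i$ is handled by freezing $\beta_i$ at the current iterate, since the sign structure holds for every admissible value.

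\textbf{Error bound.} Let $u_I$ be the nodal interpolant of $u$ and set $e=u_I-u_h^2$. Then $A^2 e=\tau$, where $\tau$ is the local truncation error.

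\emph{Convection part of $\tau$.} Since $\beta_i$ is chosen so that \eqref{conclude-1} holds, the theorem preceding \eqref{conclude-1} gives a contribution of $O(h_i^4)$ per row.

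\emph{Diffusion part of $\tau$.} The PFVE operator \eqref{diff-operator} reproduces the flux exactly for linear functions. A Taylor expansion of the two-point fluxes \eqref{F1-exp}, together with the weighted combination \eqref{tpflux}, should give a residual of $O(h^4)$ per control volume, after comparing with $\int_{\partial b_{X_i}}\kappa\nabla u\cdot\vec n\,ds$ on the quasi-uniform mesh. I expect this to require the symmetric cancellations of the barycentric dual mesh.

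\emph{Right-hand side.} The data $f_i$ in \eqref{fvector} are exact. Therefore $|\tau_i|=O(h^4)$, that is, $O(h^2)$ relative to $|b_{X_i}|=O(h^2)$.

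\textbf{Stability.} I next need a bound on $\|(A^2)^{-1}\|$ measured in the scaled norm $\|v\|^2=\sum_i|b_{X_i}|v_i^2$. Since $(A^2)^{-1}\ge0$, I will use a comparison (barrier) function. Take $w$ to be the interpolant of a smooth function $\phi$ with $-\nabla\cdot(\kappa\nabla\phi)+\mathbf{b}\cdot\nabla\phi\ge1$ in $\Omega$ and $\phi\ge0$ on $\partial\Omega$. By the consistency just shown, $A^2w\ge c|b_{X_i}|$ row-wise. Then $|e_i|\le C\max_j|\tau_j|/|b_{X_j}|\cdot\max w = O(h^2)$ pointwise. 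This yields $\|u_I-u_h^2\|_{L_2}=O(h^2)$. Combined with $\|u-u_I\|_{L_2}=O(h^2)$, this gives the claimed bound.

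\textbf{Main obstacle.} The hardest step is the diffusion consistency. The coefficients $\mu$ in \eqref{mu1-0} depend on the solution, so the operator is nonlinear, and the hypothesis $u\in C^2$ is too weak for an $O(h^2)$ pointwise truncation error. I would first try to assume extra smoothness ($u\in C^3$ locally, as implicitly used in Lemma \ref{err-tau}). I would then show that the $\mu$-weighted combination is a convex-type average of two consistent gradient approximations, so its error inherits the $O(h)$ gradient accuracy from Corollary \ref{err-grad-up}. The lost order would be recovered by summing flux errors over edges of neighbouring control volumes, where they cancel in a telescoping fashion. If pointwise $O(h^2)$ fails, the fallback is a discrete energy argument with a supercloseness estimate.
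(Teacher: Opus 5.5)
The paper states Theorem~\ref{convent-2} without any proof. Its only supporting material is \eqref{beta>0}--\eqref{beta<0} and \eqref{conclude-1}, both of which you reuse. So your outline has to stand on its own, and both halves have a genuine gap.

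\emph{Monotonicity.} Your sign check for $\beta_i\ge 0$ is correct. The case $\beta_i<0$ fails.
\begin{itemize}
\item The shift in \eqref{beta<0} adds $\alpha u_{i_1}$ to both sides, so the system actually solved is still $A^2u_h^2=f$. The shifted matrix being an $M$-matrix does not give $(A^2)^{-1}\ge 0$; at best it keeps the iterates of a shifted Picard loop nonnegative, if that loop converges.
\item In $A^2$ itself, since $\sum_l\nabla\lambda_{i_l}=0$, the convection row sum is $\beta_ih_i^2c_{ii_1}<0$ whenever $\beta_i<0$.
\item By \eqref{eq-beta-2}, $\beta_i$ is an $O(1)$ quantity divided by $u(\mathbf{x}_{i_1})$. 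Hence $|\beta_i|h_i^2$ is not small where $u=O(h^2)$ (e.g.\ near $\partial\Omega$ when $g=0$), and even the diagonal entry can turn negative.
\item For constant $\mathbf{b}$ at a fixed point, $\beta_ih_i^2c_{ii_1}u_{i_1}^h=-\int_{b_{X_i}}\mathbf{b}\cdot\nabla\tilde g^{up}\,d\mathbf{x}$ with fitted second derivatives. This is a linear second-difference correction whose stencil has coefficients of both signs, of the same size $O(h)$ as the $c_{ii_l}$. The ``nonlinear'' form merely moves these mixed signs onto the diagonal by dividing by $u_{i_1}^h$. So nonnegative row sums cannot hold ``for every admissible value''.
\item Independently, nonnegative row sums of $A_d$ are not automatic. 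With $\mu$ frozen, the coefficients of each PFVE two-point flux sum, up to sign, to $\mu_1^1g_{1,1}^{\tau_0}+\mu_1^2g_{1,1}^{\tau_1}$. By \eqref{mu1-0} this vanishes only if $u_1^h=u_{1'}^h$. Conservative nonlinear two-point fluxes give \emph{column} dominance, which does not combine with the row dominance of the upwind part.
\end{itemize}

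\emph{Error bound.} Since $\mu$ in \eqref{mu1-0} and $\beta_i$ depend on $u_h^2$, there is no single matrix with $A^2e=\tau$.
\begin{itemize}
\item The exact identity is $A^2(u_h^2)(u_I-u_h^2)=A^2(u_h^2)u_I-f$. What you estimate is $A^2(u_I)u_I-f$, with $\mu$, $\beta_i$ and the fitted second derivatives all evaluated at $u$.
\item A frozen PFVE flux is consistent only for the function it was frozen at. Applied to another smooth $w$, it omits $\mu_1^1g_{1,1}^{\tau_0}w_1+\mu_1^2g_{1,1}^{\tau_1}w_{1'}$. This vanishes for $w=u_h^2$ but is generically $O(h)$ per dual edge, the same order as the flux itself.
\item Likewise, $\beta_i(u_h^2)h_i^2c_{ii_1}w_i$ carries a spurious factor $w_i/u_{i_1}^h$.
\item The same defect invalidates your barrier inequality $A^2w\ge c|b_{X_i}|$.
\item Worse, even for a linear FVE diffusion operator on random or Kershaw meshes, $\nabla u_I$ is only $O(h)$-accurate pointwise. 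The flux error is then $O(h^2)$ per dual edge, and the residual is $O(h^2)$ per control volume, i.e.\ $O(1)$ relative to $|b_{X_i}|$, not $O(h^4)$. No cancellation of the barycentric dual rescues this on non-uniform meshes, so the bound $|e_i|\le C\max_j|\tau_j|/|b_{X_j}|$ yields only $O(1)$.
\end{itemize}
Second-order $L_2$ convergence of FVE methods is a supraconvergence effect. It is proved by an energy/duality argument that needs coercivity (or an inf-sup bound) of the discrete operator. It also needs more regularity than $u\in C^2$; Lemma~\ref{err-tau} already uses third derivatives. For this nonsymmetric, solution-dependent operator, that coercivity is exactly the missing ingredient. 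Your fallback names it but does not supply it.
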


\begin{corollary}\label{err-grad-up}
Under the assumptions of Theroem \ref{convent-2}, the approximate solution  $u_h^{1}$ of  MFVE-1 scheme is monotonous and satisfies 
\begin{equation}\label{err2}
||u-u_h^{1}||_{L_2} = O(h).
\end{equation}
  \end{corollary}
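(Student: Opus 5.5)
The statement to prove is the corollary for the MFVE-1 scheme \eqref{firstshceme}. The natural route is to mirror the proof of Theorem \ref{convent-2}: first monotonicity through an $M$-matrix argument, then an $L^2$ error bound through a stability-plus-consistency argument. The only change from the second-order case is that the truncation error of the convection operator is one order lower.

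\textbf{Monotonicity.} Write the global system of \eqref{firstshceme} as $A^1u_h^1=f$ with $A^1=A_d+A_c^1$. The entries of $A_c^1$ are the coefficients $c_{ii_l}$ from \eqref{convect-operator}, which is exactly the case $\beta_i=0$ of \eqref{eq-non-up}.

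For the upwind element $\tau_i^{up}$, the ray from $X_i$ in direction $-\mathbf{b}(X_i)$ enters the element. Hence $\mathbf{b}\cdot\nabla\lambda_{i}^{up}\ge 0$ and $\mathbf{b}\cdot\nabla\lambda_{i_l}^{up}\le 0$ for $l=2,3$. Since $\sum_l\nabla\lambda_{i_l}=0$, the row sum of the convection part vanishes. (Strictly, the sign argument uses $\mathbf{b}(X_i)$; for variable $\mathbf{b}$, quadrature points $X_i^k$ near $X_i$ give the signs up to $O(h)$, and I would assume $h$ is small enough that the signs persist.) This is \eqref{beta>0} with $\beta_i=0$.

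Adding these coefficients to the $M$-matrix $A_d$ therefore keeps the off-diagonal entries nonpositive. It also keeps the diagonal positive and the weak row-diagonal dominance, with strict dominance on rows adjacent to the boundary, inherited from $A_d$. Irreducibility is inherited from the mesh connectivity of $A_d$. So $A^1$ is an irreducibly diagonally dominant $Z$-matrix, hence an $M$-matrix with $(A^1)^{-1}\ge0$. This gives monotonicity and a discrete maximum principle.

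\textbf{Consistency.} Let $u_I$ be the nodal interpolant and set $e=u_I-u_h^1$, so that $A^1e=\tau$ with $\tau_i=(A^1u_I)_i-f_i$. Subtracting the balance equation \eqref{app-blance} satisfied by the exact $u$ splits $\tau_i$ into two parts:
\begin{itemize}
\item a diffusion truncation error, which I take from the PFVE operator \eqref{diff-operator} as established in \cite{Nie2}, of order $O(h^2)$ per row after the usual scaling;
\item a convection truncation error $(L_h^cu,\psi_i)-(L^cu,\psi_i)$, which Proposition \ref{discret-conv} bounds by $O(h_i^2)$.
\end{itemize}
Dividing the balance rows by $|b_{X_i}|\sim h^2$ (quasi-uniformity) gives a pointwise consistency error of order $O(1)$ for the unscaled first-order convection part. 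This is too crude, and it is the main obstacle.

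\textbf{Stability and the error bound.} I would handle the obstacle with a barrier-function argument using the $M$-matrix property. Choose a smooth comparison function $w$ with $\mathbf{b}\cdot\nabla w\ge 1$, for instance $w=\mathbf{b}_0\cdot\mathbf{x}/|\mathbf{b}_0|^2$ in the constant-coefficient case. Then $A^1 w_I\gtrsim|b_{X_i}|$ row-wise. By the comparison principle,
\[
\|e\|_\infty\le \|w\|_\infty\max_i\frac{|\tau_i|}{|b_{X_i}|}.
\]
This reduces the task to showing $|\tau_i|=O(h^3)$, i.e.\ an $O(h)$ consistency error per unit area. That bound follows because the error in \eqref{eq-FE-up-L-2-p2} is $O(h)\cdot|b_{X_i}|$ by Corollary \ref{err-grad-up}; the stated $O(h_i^3)$ there should be read in this sense, as only the first-order gradient error enters.

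The result is $\|e\|_\infty=O(h)$. Then $\|u-u_h^1\|_{L^2}\le\|u-u_I\|_{L^2}+|\Omega|^{1/2}\|e\|_\infty=O(h^2)+O(h)=O(h)$, which is \eqref{err2}.

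The delicate step is justifying the barrier inequality $A^1w_I\gtrsim|b_{X_i}|$ uniformly in the diffusion size. I would first verify it on the convection part, where it holds exactly for linear $w$ up to quadrature. For the diffusion part I would then show it is a nonnegative contribution, or at least $O(h^2)\cdot|b_{X_i}|/h^2$, absorbed by taking $w$ with a small quadratic correction.
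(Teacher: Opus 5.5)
The paper gives no proof of this corollary, nor of Theorem~\ref{convent-2}, so there is no argument to compare your route against. Your proposal therefore has to stand on its own, and two of its steps do not go through.

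\textbf{Monotonicity.} Your sign analysis is right for $\mathbf{b}(X_i)$. Writing $-\mathbf{b}=s(X_{i_2}-X_i)+t(X_{i_3}-X_i)$ with $s,t\ge0$ gives $\mathbf{b}\cdot\nabla\lambda_{i_2}=-s$ and $\mathbf{b}\cdot\nabla\lambda_{i_3}=-t$.

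For variable $\mathbf{b}$, taking ``$h$ small enough'' cannot save the signs when $-\mathbf{b}(X_i)$ is parallel to an edge of $\tau_i^{up}$. Instead, choose the upwind element with respect to the averaged vector $\sum_k\omega_k\mathbf{b}(X_i^k)$ that actually enters $c_{ii_l}$.

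The real gap is the next step. The hypothesis is only that $A_d$ is an $M$-matrix; it does not give $A_d\mathbf{1}\ge0$. An $M$-matrix plus a zero-row-sum $Z$-matrix need not be an $M$-matrix. For example, $\begin{pmatrix}1&-2\\0&1\end{pmatrix}+\begin{pmatrix}0&0\\-c&c\end{pmatrix}$ has determinant $1-c<0$ for $c>1$, and $c\sim|\mathbf{b}|h\gg\|\kappa\|$ is exactly the convection-dominated regime.

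For the PFVE operator the two-point coefficients satisfy $\alpha_{2,1}\neq\alpha_{i,1}$ in general, so the row sums of $A_d$ have no sign. ``Weak row dominance inherited from $A_d$'' is therefore an extra hypothesis. You must either add it (e.g.\ $A_d\mathbf{1}\ge0$, strict in boundary rows, plus irreducibility) or assume directly that $A_d+A_c^1$ is an $M$-matrix.

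\textbf{Error bound.} Your treatment of the convection part is sound: the residual is $O(h)|b_{X_i}|$ and $(A_c^1w_I)_i=|b_{X_i}|$ for linear $w$. But you check $|\tau_i|=O(h^3)$ only for that part.

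On the quasi-uniform but unstructured meshes admitted here (random, Kershaw), the flux of a piecewise linear interpolant through each dual edge is only first-order accurate, and nothing forces cancellation. So the diffusion residual is in general $O(\|\kappa\|h^2)$ per control volume, i.e.\ $O(\|\kappa\|)$ after dividing by $|b_{X_i}|$.

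Your barrier only supplies $(A^1w_I)_i\gtrsim|b_{X_i}|$, so the comparison principle yields $\|e\|_\infty\le C(\|\kappa\|+h)$. This is $O(h)$ only when $\|\kappa\|\lesssim h$. The hypotheses impose no such restriction, and the paper's own tests include $\kappa=I$.

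No other barrier helps. A diffusive one also has $(A_dw_I)_i\sim\|\kappa\|h^2$, so the ratio of residual to barrier stays $O(1)$. This is the classical reason maximum-principle arguments do not yield convergence of finite volume schemes on general meshes. What is missing is to exploit the flux (divergence) form of the diffusion residual through a discrete energy or duality estimate. That in turn needs coercivity and conservation properties of the diffusion operator, which the $M$-matrix hypothesis does not supply.

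Moreover, $A_d=A_d(u_h)$ is nonlinear: the weights $\mu$ in \eqref{mu1-0} cancel the $\beta$-terms only for the vector $u_h$. Two consequences follow:
\begin{itemize}
\item $A_d(u_h)u_I$ minus the exact flux contains error-dependent terms.
\item $A_d(u_h)w_I$ has no definite sign, even for linear $w$.
\end{itemize}
Hence neither the diffusion consistency you import nor the hoped-for nonnegative diffusive contribution in the barrier step is available as stated.
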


\section{Numerical experiments}

In this subsection, we will focus on the convergence, adaptivity and monotonous properties on two typical meshes: Random meshes and Kershaw meshes \cite{z-2} (Shown as Fig. \ref{meshes}), respectively. 

In numerical experiments, the diffusion term is discretized by PFVE method \eqref{diff-operator}, while the convection term is discretized by FVE methods, i.e., the convection-dominated diffusion problems are solved by MFVE-1, MFVE-2 schemes, respectively. The corresponding approximate solutions are denoted as $u_h^{1}$ and $u_h^{2}$, respectively, and $h$  is the step size of $\Omega_h$. The Picard iteration will stop if $\frac{\|\mathbf{r}_k\|_0}{\|\mathbf{r}_0\|_0}< 10^{-5}$, where the residuals $ \mathbf{r}_k= \mathbf{f}-A (U^k) U^k$, $\mathbf{r}_0= \mathbf{f}-A(U^0) U^0$, and $\| \cdot \|_0$ is the discrete $l_2$ norm:
$\|\mathbf{v}\|_0 = (\frac{1}{N}\sum\limits_{i=1}^N v_i^2)^\frac{1}{2}$.
The linear solver is Bicgstable, and the linear iteration will stop if the norm of the relative residual for the corresponding linear system is less than $10^{-10}$. 

\begin{figure}[H]
     \centering{
    \includegraphics[scale=0.65]{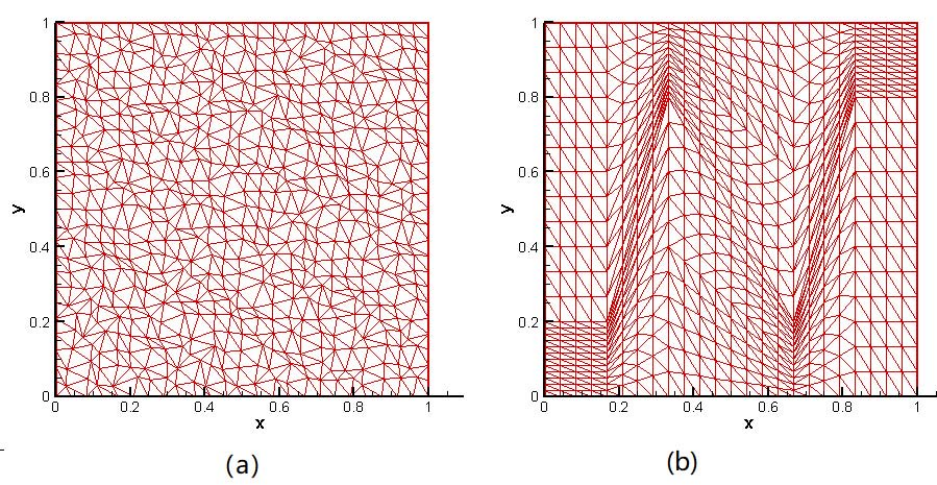}}
    \vskip -0.2cm
    \caption{(a)  Random meshes.
    ~(b) Kershaw meshes. }\label{meshes}
\end{figure}

\subsection{The convergence of new schemes}

In the following, we will focus on the stability and convergence of MFVE-1 and MFVE-2 schemes for convection-dominated diffusion problems.

\begin{example} \label{eg-4}
In model problem (\ref{eq-PDE-1}), we choose $\Omega = [0,1]^2$, the exact solution
$$
u(x,y) = e^{-\frac{(x-1)^2 + (y-1)^2}{2 \eta^2}},~~\eta = 0.1,
$$
and $g_1= u|_{\partial \Omega}$, $\mathbf{b} = (2,3)^{\top}$, and  $\varepsilon_k =10^{-k}I_{2 \times 2}$, $0 \leq k \leq 3$.
\end{example}

MFVE-1 and MFVE-2 schemes are employed, where approximate solutions (resp. errors) are, respectively, denoted as $u_h^1$, $u_h^2$ (resp. $e_h^1= u-u_h^1$,  $e_h^2= u-u_h^2$).
Numerical results are shown in Tables from \ref{tab-1} to \ref{tab-8}
for two meshes and different $\varepsilon_k$, $0 \leq k \leq 3$, where $\|\cdot\|_k, k=0,1$ is, respectively, the $L_2$, $H_1$ norm of the corresponding approximate errors. No further details for similar denotations will be described subsequently.‌
Tables 1, 3, 5, 7 are for Random meshes,  and Tables 2, 4, 6, 8 are for Kershaw meshes.

\begin{table}[H]
{\small

\caption{ The errors of $u_h^{1},u_h^{2}$ in norms $L_2$ and $H_1$ for 
$\varepsilon_0$ on random meshes.}\label{tab-1}
	\centering
\begin{tabular}{ccccccc cc }
\hline
nx &  $\|e_h^{1}\|_0$    & $\gamma_0$ &   $\|e_h^{1}\|_1$    & $\gamma_1$ & $\|e_h^{2}\|_0$    & $\gamma_0$ & $\|e_h^{2}\|_1$    & $\gamma_1$\\ \hline
24    & {7.18e-4} &      & {2.66e-1} &      & {9.26e-4} &       &2.65e-1  &\\
48   & { 2.61e-4} & 2.75 & {1.39e-1} & 1.91 & {2.38e-4} & 3.89  &1.39e-1  & 1.91\\
96   & { 1.32e-4} & 1.98 & {6.97e-2} & 1.99 & {5.62e-5} & 4.23  &6.97e-2  & 1.99\\
192   & {7.10e-5} & 1.86 & {3.50e-2} & 1.99 & {1.41e-5} & 3.99  &3.51e-2  & 1.99 \\\hline
\end{tabular}}
\end{table}

\begin{table}[H]
{\small
\caption{ The errors of $u_h^{1},u_h^{2}$ in norms $L_2$ and $H_1$ for 
$\varepsilon_0$ on Kershaw meshes.}\label{tab-2}
	\centering
\begin{tabular}{ccccccc cc }
\hline
nx &  $\|e_h^{1}\|_0$    & $\gamma_0$ &   $\|e_h^{1}\|_1$    & $\gamma_1$ & $\|e_h^{2}\|_0$    & $\gamma_0$ & $\|e_h^{2}\|_1$    & $\gamma_1$\\ \hline
24    & {2.99e-3} &      & {2.76e-1} &      & {2.30e-3} &       &2.76e-1  &\\
48   & { 1.24e-3} & 2.41 & {1.44e-1} & 1.92 & {8.23e-4} & 2.79  &1.44e-1  & 1.92\\
96   & { 4.91e-4} & 2.52 & {7.34e-2} & 1.96 & {2.52e-4} & 3.27  &7.34e-2  &1.96\\
192   & {2.05e-4} & 2.39 & {3.70e-2} & 1.98 & {6.94e-5} & 3.63  &3.69e-2  &1.98\\\hline
\end{tabular}}
\end{table}


\begin{table}[H]
{\small
\caption{ The errors of  $u_h^{1},u_h^{2}$ in norms $L_2$ and $H_1$ for 
$\varepsilon_1$  on random meshes.}\label{tab-3}
	\centering
\begin{tabular}{ccccccc cc }
\hline
nx &  $\|e_h^{1}\|_0$    & $\gamma_0$ &   $\|e_h^{1}\|_1$    & $\gamma_1$ & $\|e_h^{2}\|_0$    & $\gamma_0$ & $\|e_h^{2}\|_1$    & $\gamma_1$\\ \hline
24    & {3.32e-3} &      & {2.72e-1} &      & {3.29e-4} &       &2.66e-1  &\\
48   & { 1.83e-3} & 1.81 & {1.42e-1} & 1.92 & {1.08e-4} & 3.05  &1.39e-1  & 1.91\\
96   & { 9.66e-4} & 1.89 & {7.17e-2} & 1.98 & {2.86e-5} & 3.78  &6.97e-2  & 1.99 \\
192   & {4.96e-4} & 1.95 & {3.62e-2} & 1.98 & {7.51e-6} & 3.81  &3.51e-2  & 1.99 \\\hline
\end{tabular}}
\end{table}

\begin{table}[H]
{\small
\caption{ The errors of  $u_h^{1},u_h^{2}$ in norms $L_2$ and $H_1$ for 
$\varepsilon_1$  on Kershaw meshes.}\label{tab-4}
	\centering
\begin{tabular}{ccccccc cc }
\hline
nx &  $\|e_h^{1}\|_0$    & $\gamma_0$ &   $\|e_h^{1}\|_1$    & $\gamma_1$ & $\|e_h^{2}\|_0$    & $\gamma_0$ & $\|e_h^{2}\|_1$    & $\gamma_1$\\ \hline
24    & {7.91e-3} &      & {2.88e-1} &      & {2.52e-3} &       &2.78e-1  &\\
48   & { 4.10e-3} & 1.93 & {1.51e-1} & 1.91 & {8.43e-4} & 2.99  &1.44e-1  & 1.93\\
96   & { 2.07e-3} & 1.98 & {7.77e-2} & 1.94 & {2.42e-4} & 3.48  &7.35e-2  & 1.96\\
192   & {1.03e-3} & 2.01 & {3.93e-2} & 1.98 & {6.43e-5} & 3.76  &3.69e-2  & 1.99\\\hline
\end{tabular}}
\end{table}


\begin{table}[H]
{\small
\caption{ The errors of $u_h^{1},u_h^{2}$ in norms $L_2$ and $H_1$ for 
$\varepsilon_2$  on random meshes.}\label{tab-5}
	\centering
\begin{tabular}{ccccccc cc }
\hline
nx &  $\|e_h^{1}\|_0$    & $\gamma_0$ &   $\|e_h^{1}\|_1$    & $\gamma_1$ & $\|e_h^{2}\|_0$    & $\gamma_0$ & $\|e_h^{2}\|_1$    & $\gamma_1$\\ \hline
24    & {6.59e-3} &      & {3.02e-1} &      & {2.27e-3} &       &2.83e-1  & \\
48   & { 3.56e-3} & 1.85 & {1.69e-1} & 1.79 & {6.33e-4} & 3.58  &1.43e-1  & 1.98\\
96   & { 1.88e-3} & 1.89 & {9.24e-2} & 1.83 & {1.76e-4} & 3.60  &7.03e-2  & 2.03\\
192   & {9.72e-4} & 1.93 & {4.96e-2} & 1.86 & {4.74e-5} & 3.71  &3.51e-2  & 2.00\\\hline
\end{tabular}}
\end{table}

\begin{table}[H]
{\small
\caption{ The errors of $u_h^{1},u_h^{2}$ in norms $L_2$ and $H_1$ for 
$\varepsilon_2$ on Kershaw meshes.}\label{tab-6}
	\centering
\begin{tabular}{ccccccc cc }
\hline
nx &  $\|e_h^{1}\|_0$    & $\gamma_0$ &   $\|e_h^{1}\|_1$    & $\gamma_1$ & $\|e_h^{2}\|_0$    & $\gamma_0$ & $\|e_h^{2}\|_1$    & $\gamma_1$\\ \hline
24    & {1.57e-2} &      & {3.85e-1} &      & {5.56e-3} &       &3.14e-1  &\\
48   & { 7.82e-3} & 2.00 & {2.08e-1} & 1.85 & {1.54e-3} & 3.61  &1.52e-1  & 2.06\\
96   & { 3.88e-3} & 2.01 & {1.09e-1} & 1.91 & {4.51e-4} & 3.41  &7.47e-2  & 2.03 \\
192   & {1.93e-3} & 2.01 & {5.63e-2} & 1.94 & {1.13e-4} & 3.99  &3.72e-2  & 2.01\\\hline
\end{tabular}}
\end{table}



\begin{table}[H]
{\small
\caption{ The errors of $u_h^{1},u_h^{2}$ in norms $L_2$ and $H_1$ for 
$\varepsilon_3$  on random meshes.}\label{tab-7}
	\centering
\begin{tabular}{ccccccc cc }
\hline
nx &  $\|e_h^{1}\|_0$    & $\gamma_0$ &   $\|e_h^{1}\|_1$    & $\gamma_1$ & $\|e_h^{2}\|_0$    & $\gamma_0$ & $\|e_h^{2}\|_1$    & $\gamma_1$\\ \hline
24    & {7.29e-3} &      & {3.17e-1} &      & {6.24e-3} &       &4.28e-1  &\\
48   & { 3.95e-3} & 1.85 & {1.91e-1} & 1.66 & {2.33e-3} & 2.68  &2.63e-1  & 1.63\\
96   & { 2.08e-3} & 1.90 & {1.18e-1} & 1.62 & {6.56e-4} & 3.55  &8.64e-2  & 3.04 \\
192   & {1.08e-3} & 1.93 & {6.67e-2} & 1.77 & {1.73e-4} & 3.79  &4.03e-2  & 2.14\\\hline
\end{tabular}}
\end{table}

\begin{table}[H]
{\small
\caption{ The errors of $u_h^{1},u_h^{2}$ in norms $L_2$ and $H_1$ for 
$\varepsilon_3$  on Kershaw meshes.}\label{tab-8}
	\centering
\begin{tabular}{ccccccc cc }
\hline
nx &  $\|e_h^{1}\|_0$    & $\gamma_0$ &   $\|e_h^{1}\|_1$    & $\gamma_1$ & $\|e_h^{2}\|_0$    & $\gamma_0$ & $\|e_h^{2}\|_1$    & $\gamma_1$\\ \hline
24    & {1.72e-2} &      & {4.44e-1} &      & {9.37e-3} &       &5.20e-1  &\\
48   & { 8.42e-3} & 2.04 & {2.59e-1} & 1.71 & {3.71e-3} & 2.53  &3.49e-1  & 1.49\\
96   & { 4.18e-3} & 2.01 & {1.53e-1} & 1.70 & {6.35e-4} & 5.84  &1.07e-1  & 3.26 \\
192   & {2.08e-3} & 2.01 & {8.98e-2} & 1.71 & {1.67e-4} & 3.80  &4.00e-2  & 2.67 \\\hline
\end{tabular}}
\end{table}


The results of columns from 2 to 5 in Tab. \ref{tab-1} to Tab.\ref{tab-8} show that MFVE-1 scheme is stable and first-order convergent in both $L_2$ and $H_1$ norms for convection-dominated cases whether Random meshes or Kershaw meshes,  and the norm $H_1$ of error is approaches first-order as the $\varepsilon$ becomes smaller, i.e., the convection term dominates greatly. 

The results of  columns from 6 to 9 in Tab. \ref{tab-1} to Tab.\ref{tab-8} show that MFVE-2 scheme is stable and, respectively, second-order (resp. first-order ) convergent in $L_2$ (resp. $H_1$) norm for convection-dominated cases whether Random meshes or Kershaw meshes,  i.e., MFVE-2 is approximately saturated convergent order as the $\varepsilon$ becomes smaller, i.e., the convection term dominates greatly. 

\subsection{The adapativity of new schemes}

In the following, we will focus on the adaptivity of MFVE-1 and MFVE-2 schemes for convection-dominated diffusion problems.

\begin{example}
In model problem {(\ref{eq-PDE-1})}, we choose $\Omega = [0,1]^2$, $\mathbf{b} = (-1,1)^{\top}$, and  the exact solution
$$
u(x,y) = cos(\frac{\pi x}{2})e^y,
$$
$g= u|_{\partial \Omega}$, and the anisotropy diffusive tensor
$$
\kappa =
\left(
\begin{array}{cc}
10 \epsilon_k  &  0 \\
     0          & 0.1 \epsilon_k
\end{array}
\right),  \epsilon_k = 10^{-k}, 0 \leq k \leq 4.
$$
\end{example}  

MFVE-1 and MFVE-2 schemes are employed, where approximate errors are, respectively, denoted as $e_h^1= u-u_h^1$ and $e_h^2= u-u_h^2$.
Numerical results are shown in Tables from \ref{tab-9} to \ref{tab-16}
for different $\varepsilon_k$, $0 \leq k \leq 3$, where $\|\cdot\|_k, k=0,1$ is, respectively, the $L_2$, $H_1$ norm of the corresponding approximate errors.
Tables 9, 11, 13, 15 are for Random meshes,  and Tables 10, 12, 14, 16 are for Kershaw meshes.

\begin{table}[H]
{\small
\caption{ The errors of $u_h^{1},u_h^{2}$ in norms $L_2$ and $H_1$ for 
$\varepsilon_0$  on random meshes.}\label{tab-9}
	\centering
\begin{tabular}{ccccccc cc }
\hline
nx &  $\|e_h^{1}\|_0$    & $\gamma_0$ &   $\|e_h^{1}\|_1$    & $\gamma_1$ & $\|e_h^{2}\|_0$    & $\gamma_0$ & $\|e_h^{2}\|_1$    & $\gamma_1$\\ \hline
24    & {6.16e-4} &      & {8.74e-2} &      & {4.56e-4} &       &8.74e-2  &\\
48   & { 2.29e-4} & 2.71 & {4.41e-2} & 1.98 & {1.36e-4} & 3.35  &4.42e-2  & 1.91\\
96   & { 9.32e-5} & 2.45 & {2.21e-2} & 1.99 & {4.01e-5} & 3.39  &2.21e-2  & 2.00\\
192   & {4.05e-5} & 2.29 & {1.11e-2} & 1.99 & {1.05e-5} & 3.82  &1.10e-2  & 2.01\\\hline
\end{tabular}}
\end{table}

\begin{table}[H]
{\small
\caption{ The errors of $u_h^{1},u_h^{2}$ in norms $L_2$ and $H_1$ for 
$\varepsilon_0$  on Kershaw meshes.}\label{tab-10}
	\centering
\begin{tabular}{ccccccc cc }
\hline
nx &  $\|e_h^{1}\|_0$    & $\gamma_0$ &   $\|e_h^{1}\|_1$    & $\gamma_1$ & $\|e_h^{2}\|_0$    & $\gamma_0$ & $\|e_h^{2}\|_1$    & $\gamma_1$\\ \hline
24    & {5.44e-3} &      & {1.47e-1} &      & {5.43e-4} &       &1.48e-1  & \\
48   & { 2.06e-3} & 2.68 & {7.27e-2} & 2.02 & {2.03e-4} & 2.36  &7.28e-2  & 2.01\\
96   & { 6.49e-4} & 3.17 & {3.58e-2} & 2.03 & {6.25e-5} & 3.25  &3.58e-2  & 2.03\\
192   & {3.16e-4} & 2.05 & {1.81e-2} & 1.98 & {1.63e-5} & 3.83  &1.75e-2  & 2.04\\\hline
\end{tabular}}
\end{table}


\begin{table}[H]
{\small
\caption{ The errors of $u_h^{1},u_h^{2}$ in norms $L_2$ and $H_1$ for 
$\varepsilon_1$  on random meshes.}\label{tab-11}
	\centering
\begin{tabular}{ccccccc cc }
\hline
nx &  $\|e_h^{1}\|_0$    & $\gamma_0$ &   $\|e_h^{1}\|_1$    & $\gamma_1$ & $\|e_h^{2}\|_0$    & $\gamma_0$ & $\|e_h^{2}\|_1$    & $\gamma_1$\\ \hline
24    & {2.86e-3} &      & {8.92e-2} &      & {4.29e-4} &       &8.75e-2  &\\
48   & { 1.39e-3} & 2.06 & {4.55e-2} & 1.96 & {1.17e-4} & 3.67  &4.41e-2  & 1.98\\
96   & { 6.99e-4} & 1.99 & {2.31e-2} & 1.97 & {3.38e-5} & 3.46  &2.21e-2  & 1.99 \\
192   & {3.49e-4} & 2.00 & {1.17e-2} & 1.97 & {9.03e-6} & 3.74  &1.11e-2  & 1.99 \\\hline
\end{tabular}}
\end{table}

\begin{table}[H]
{\small
\caption{ The errors of $u_h^{1},u_h^{2}$ in norms $L_2$ and $H_1$ for 
$\varepsilon_1$  on Kershaw meshes.}\label{tab-12}
	\centering
\begin{tabular}{ccccccc cc }
\hline
nx &  $\|e_h^{1}\|_0$    & $\gamma_0$ &   $\|e_h^{1}\|_1$    & $\gamma_1$ & $\|e_h^{2}\|_0$    & $\gamma_0$ & $\|e_h^{2}\|_1$    & $\gamma_1$\\ \hline
24    & {5.66e-3} &      & {1.45e-1} &      & {4.97e-3} &       &1.47e-1  &\\
48   & { 2.39e-3} & 2.36 & {7.23e-2} & 2.00 & {1.74e-3} & 2.85  &7.26e-2  &2.02\\
96   & { 9.65e-4} & 2.31 & {3.59e-2} & 2.01 & {5.10e-4} & 3.41  &3.57e-2  &2.03 \\
192   & {4.12e-4} & 2.34 & {1.79e-2} & 2.01 & {1.38e-4} & 3.70  &1.77e-2  &2.01\\\hline
\end{tabular}}
\end{table}


\begin{table}[H]
{\small
\caption{ The errors of $u_h^{1},u_h^{2}$ in norms $L_2$ and $H_1$ for 
$\varepsilon_2$  on random meshes.}\label{tab-13}
	\centering
\begin{tabular}{ccccccc cc }
\hline
nx &  $\|e_h^{1}\|_0$    & $\gamma_0$ &   $\|e_h^{1}\|_1$    & $\gamma_1$ & $\|e_h^{2}\|_0$    & $\gamma_0$ & $\|e_h^{2}\|_1$    & $\gamma_1$\\ \hline
24    & {1.17e-2} &      & {1.36e-1} &      & {5.01e-4} &       &8.86e-2  &\\
48   & { 5.97e-3} & 1.96 & {8.25e-2} & 1.65 & {1.85e-4} & 2.71  &4.41e-2  & 2.01\\
96   & { 3.07e-3} & 1.96 & {5.21e-2} & 1.59 & {4.78e-5} & 3.87  &2.20e-2  & 2.00 \\
192   & {1.55e-3} & 1.98 & {3.30e-2} & 1.58 & {1.29e-5} & 3.71  &1.10e-2  & 2.00\\\hline
\end{tabular}}
\end{table}

\begin{table}[H]
{\small
\caption{ The errors of $u_h^{1},u_h^{2}$ in norms $L_2$ and $H_1$ for 
$\varepsilon_2$  on Kershaw meshes.}\label{tab-14}
	\centering
\begin{tabular}{ccccccc cc }
\hline
nx &  $\|e_h^{1}\|_0$    & $\gamma_0$ &   $\|e_h^{1}\|_1$    & $\gamma_1$ & $\|e_h^{2}\|_0$    & $\gamma_0$ & $\|e_h^{2}\|_1$    & $\gamma_1$\\ \hline
24    & {9.27e-3} &      & {1.54e-1} &      & {2.74e-3} &       &1.48e-1  &\\
48   & { 5.06e-3} & 1.83 & {8.26e-2} & 1.86 & {7.21e-4} & 3.80  &7.29e-2  &  2.15\\
96   & { 2.64e-3} & 1.92 & {4.55e-2} & 1.82 & {1.85e-4} & 3.90  &3.58e-2  & 2.03\\
192   & {1.33e-3} & 1.98 & {2.47e-2} & 1.84 & {4.64e-5} & 3.98  &1.77e-2  & 2.02\\\hline
\end{tabular}}
\end{table}



\begin{table}[H]
{\small
\caption{ The errors of $u_h^{1},u_h^{2}$ in norms $L_2$ and $H_1$ for 
$\varepsilon_3$  on random meshes.}\label{tab-15}
	\centering
\begin{tabular}{ccccccc cc }
\hline
nx &  $\|e_h^{1}\|_0$    & $\gamma_0$ &   $\|e_h^{1}\|_1$    & $\gamma_1$ & $\|e_h^{2}\|_0$    & $\gamma_0$ & $\|e_h^{2}\|_1$    & $\gamma_1$\\ \hline
24    & {1.62e-2} &      & {1.77e-1} &      & {1.60e-3} &       &1.26e-1  &\\
48   & { 8.26e-3} & 1.96 & {1.13e-1} & 1.58 & {9.21e-4} & 1.74  &7.69e-2  & 1.64\\
96   & { 4.26e-3} & 1.94 & {7.65e-2} & 1.48 & {2.38e-4} & 3.87  &3.21e-2  & 2.39\\
192   & {2.15e-3} & 1.98 & {4.22e-2} & 1.81 & {6.63e-5} & 3.59  &1.66e-2  & 1.94 \\\hline
\end{tabular}}
\end{table}

\begin{table}[H]
{\small
\caption{ The errors of$u_h^{1},u_h^{2}$ in norms $L_2$ and $H_1$ for 
$\varepsilon_3$  on Kershaw meshes.}\label{tab-16}
	\centering
\begin{tabular}{ccccccc cc }
\hline
nx &  $\|e_h^{1}\|_0$    & $\gamma_0$ &   $\|e_h^{1}\|_1$    & $\gamma_1$ & $\|e_h^{2}\|_0$    & $\gamma_0$ & $\|e_h^{2}\|_1$    & $\gamma_1$\\ \hline
24    & {1.31e-2} &      & {1.82e-1} &      & {3.42e-3} &       &1.69e-1  &\\
48   & { 7.23e-3} & 1.82 & {1.04e-1} & 1.75 & {8.10e-4} & 4.22  &8.07e-2  & 2.09\\
96   & { 3.83e-3} & 1.89 & {6.18e-2} & 1.68 & {2.02e-4} & 4.01  &3.86e-2  &2.09\\
192   & {1.97e-3} & 1.94 & {3.58e-2} & 1.72 & {4.97e-5} & 4.06  &1.85e-2  &2.08\\\hline
\end{tabular}}
\end{table}


The results of columns from 2 to 5 in Tab. \ref{tab-9} to Tab.\ref{tab-16} show that MFVE-1 scheme is stable and first-order convergent in $L_2$ and $H_1$ norms for convection-dominated cases whether Random meshes or Kershaw meshes,  and the norm $H_1$ of error is approaches first-order as the $\varepsilon$ becomes smaller, i.e., the convection term dominates greatly. 

The results of columns from 6 to 9 in Tab. \ref{tab-9} to Tab.\ref{tab-16} show that MFVE-2 scheme is stable and, respectively, second-order (resp. first-order ) convergent in $L_2$ (resp. $H_1$) norm for convection-dominated cases whether Random meshes or Kershaw meshes,  i.e., MFVE-2 is approximately saturated convergent order as the $\varepsilon$ becomes smaller, i.e., the convection term dominates greatly. 

Hence, MFVE-1 and MFVE-1 schemes are with saturated convergent order, and good at the anisotropic and convection-dominated diffusion problems. The results confirm the correctness of the theoretical conclusions.

\subsection{The positivity-preserving of new schemes}

In the following, we will focus on the positivity-preserving  (monotonicity ) of MFVE-1 and MFVE-2 schemes for convection-dominated diffusion problems.

\begin{example} \label{eg-4}
Consider Problem (\ref{eq-PDE-1}) in the region $\Omega=(0,1)^2$, and choose
\begin{eqnarray*}
\kappa=                                  \left(
                                         \begin{array}{cc}
                                           y^2+ \varepsilon x^2 +\varepsilon  & -(1-\varepsilon) xy \\
                                           -(1-\varepsilon) xy         & x^2+\varepsilon y^2 +\varepsilon \\
                                         \end{array}
                                       \right),
\end{eqnarray*}
where $\varepsilon=5\times 10^{-3}$, and
$$
f(x,y)=\left\{
         \begin{array}{ll}
           1, & \hbox{if~ $(x,y)\in [3/8,5/8]^2$,} \\
           0, & \hbox{otherwise.}
         \end{array}
       \right.
$$
and
$$
v(x,y)= (2x,3y)^T, ~g(x,y)=0,~~(x,y) \in \partial \Omega.
$$
\end{example}

It is solved by MFVE-1 and MFVE-2 schemes. The computational meshes are shown as Fig.\ref{meshes} (a) with the scale $64\times 64$, and the numerical solution obtained by MFVE-2 is shown as Fig. \ref{eg4solution} (a) and (b). From the figure, one can see that the MFVE-2 preserves the positivity of the solution.  The minimum value is 0 and the maximum values $6.54\times 10^{-2}$, which shows that our scheme preserves the positivity of the solution and does not introduce any non-physical oscillations. The results accord with that in the literature \cite{Sheng}. Similar are the results of MFVE-1, and are not listed.

\begin{figure}[h]
     \centering{
    \includegraphics[scale=0.32]{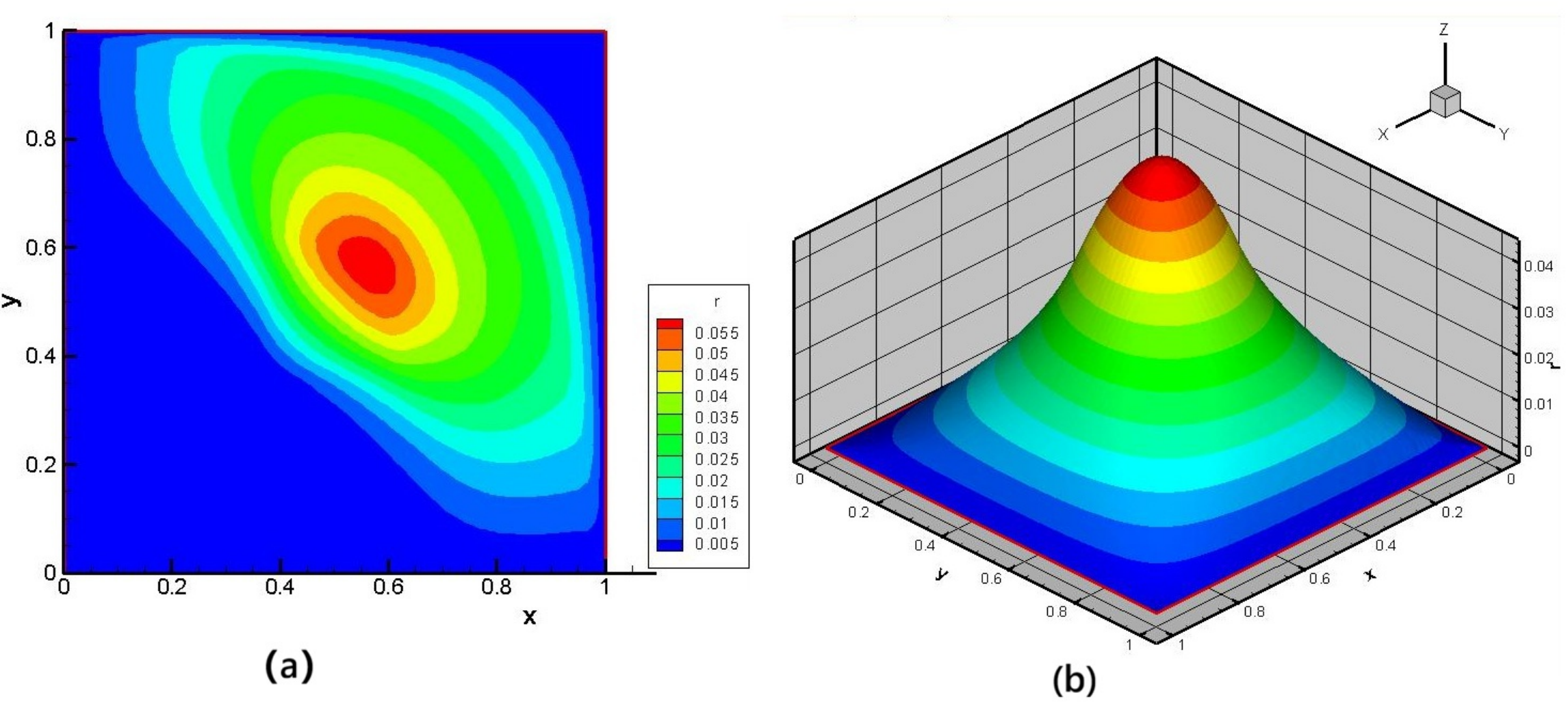}}
    \vskip -0.2cm
    \caption{Verification of the positivity for MFVE-2 scheme on random meshes~: umin =0, umax =$6.54\times 10^{-2}$.}\label{eg4solution}
\end{figure}

\section{Summary}

In this paper, one novel monotonous finite volume element (MFVE) scheme is presented for convection dominant diffusion problems.  
Firstly, one upwind volume is put forward for the discretization of the convection term, which leads to the upwind property of this scheme. 
Secondly, the convection term in the balance equation is directly discretized by numerical integration in upwind volume.
Thirdly, one asymptotic expansion of gradient functions over the dual element of each node is derived. One nonlinear second-order FVE discrete operator for the convection term is constructed by the asymptotic expansion and error estimation of its FVE solution. 
Then, one second-order nonlinear discrete operator for the convection term is constructed by the asymptotic expansion and error estimation of its FVE solution, where some perturbed coefficients are skillfully put into the expansion, which serves as a high-order correction. 
Fourthly, one novel two-order MFVE scheme, also accompanied by another one-order MFVE, is designed for convection-dominated diffusion problems together with one PFVE discrete diffusive operator. The L2 norm of the error of the approximate solution is derived. Finally, numerical results confirm theoretical conclusions.


\vskip 1.5cm
\noindent{\bf{CRediT authorship contribution statement
}}

Cunyun Nie: writing original draft, validation, methodology, investigation, formal analysis; Xiaoling Chen: resources, 
methodology, investigation; Zhujun wang: formal analysis, conceptualization; Zhikun Tian: writing, editor, numerical tests; Chengjie Xia: resources, investigation. 

\vskip 1.5cm


\bibliographystyle{cas-model2-names}

\bibliography{cas-refs}






\end{document}